\documentclass[11pt]{amsart}
\usepackage{amssymb, latexsym, amsmath, amsfonts, tikz}
\usepackage{hyperref, enumitem, fancyhdr}
\usepackage{color}
\usepackage{tikz-cd} 

\newtheorem{thm}{Theorem}[section]

\newtheorem{lem}[thm]{Lemma}
\newtheorem{prop}[thm]{Proposition}
\theoremstyle{definition}

\theoremstyle{remark}

\numberwithin{equation}{section}
\theoremstyle{remark}

\newcommand{\no}{\noindent}

\begin{document}
\title{Escaping Sets of Skew Products of H\'{e}non maps} 
\keywords{H\'{e}non maps, Escaping sets}
\author{Mahima}

\address{Indian Institute of Science Education and Research Mohali, Knowledge City, Sector -81, Mohali, Punjab-140306, India}
\email{ph22080@iisermohali.ac.in, mahimamath06@gmail.com}

\begin{abstract}
We study skew products of H\'enon maps fibered over suitable metric spaces and investigate the analytic structure of their escaping sets. Our study extends the description of escaping sets for H\'enon maps developed by Hubbard and Oberste-Vorth \cite{HOV} to skew products and provides a framework for studying their rigidity. For a compact base space, we construct an intermediate covering space of each fiberwise escaping set. When the base is the closed unit disk and the family depends holomorphically on the parameter, we obtain an analogous description of the global escaping set. We further use this covering space construction to study the relationship between biholomorphic equivalences of their global escaping sets and the underlying dynamics. Finally, for skew products over the non-compact base $\mathbb{C}$, we consider the corresponding escaping region and investigate its analytic structure.

\end{abstract}

\maketitle
\large

\section{Introduction}
In this article, we study skew products of H\'enon maps fibered over suitable metric spaces. These are maps of the form
$H : M \times \mathbb{C}^2 \longrightarrow M \times \mathbb{C}^2$
defined by
\begin{equation}\label{skew}
H(\lambda,x,y) = (\sigma(\lambda), H_\lambda(x,y)),
\end{equation}
where $\sigma$ is a continuous self-map of $M$ and, for each $\lambda \in M$, $H_\lambda$ is a H\'enon map given by
\[
H_\lambda(x,y) = (y, q_\lambda(y) - \delta_\lambda x).
\]
Here
\[
q_\lambda(y) = y^d + c_{\lambda,d-2} y^{d-2} + c_{\lambda,d-3}y^{d-3}+\cdots + c_{\lambda,0},
\]
is a monic and centered polynomial of degree $d \geq 2$, whose coefficients $c_{\lambda,i} = c_i(\lambda)$ for $0 \le i \le d-2$, and the Jacobian determinant, $\delta_\lambda = \delta(\lambda)$ depend continuously on $\lambda$. We consider this normalized form of $q_\lambda$ since any skew product
\[
(\lambda,x,y) \mapsto \left(\sigma(\lambda), y, \tilde c_d(\lambda)y^d + \tilde c_{d-1}(\lambda)y^{d-1} + \cdots + \tilde c_0(\lambda) - \tilde\delta(\lambda)x\right),
\]
can be conjugated by a map of the form
\[
(\lambda,x,y) \mapsto \left(\lambda, \tilde a(\lambda)x + \tilde b(\lambda), \tilde a(\lambda)y + \tilde b(\lambda)\right),
\]
so that the resulting map is of the form \eqref{skew}. 

\medskip
\no
H\'enon maps are dynamically interesting objects and are widely studied.  Up to conjugacy, Friedland and Milnor in \cite{FM} classified polynomial automorphisms of $\mathbb{C}^2$ into three classes: affine maps, elementary maps, and finite compositions of H\'enon maps. The dynamics of the first two classes, that is, affine maps and elementary maps, is very simple. On the other hand, although H\'enon maps may appear to be simple but it turns out that they exhibit chaotic behavior under iteration. Their dynamical properties have been studied in pioneering works of Bedford-Smillie, Forn\ae ss-Sibony and Hubbard-Oberste Vorth (see \cite{BS1}, \cite{FS}, \cite{HOV}, \cite{HOV2}).

\medskip
\no
The skew products of H\'enon maps considered in this paper come naturally in dynamics. These maps arise in dynamics of higher dimensional polynomial automorphisms. Although not much is known about the dynamics of polynomial automorphisms of $\mathbb{C}^N$, $N\geq 3$, the class of quadratic polynomial automorphisms of $\mathbb{C}^3$ was studied by  Fornaess-Wu in \cite{FW}. They classified them into seven classes. One of these classes is given by
\[
H_1(x,y,z) = \bigl(cx+d,\;Q(x)+z,\; P(x,z)+a y\bigr),
\]
where $P$ and $Q$ are polynomials with $\max\{\deg P, \deg Q\} = 2,$
and $a c \neq 0$.
If we assume that $Q(x)=0$ and define $\sigma(x)=c x+d$, then
\[
H_1(x,y,z)
= \bigl(\sigma(x),\; z,\; P(x,z)+a y\bigr),
\]
which is precisely a skew product of the form \eqref{skew}. Thus, the class studied in this article naturally includes a subclass of polynomial automorphisms of $\mathbb{C}^3$. Furthermore, skew products of the form \eqref{skew} also provide examples of {\it regular} automorphisms (see \cite{S99}).

\medskip
\noindent
One of the fundamental dynamical objects in the study of H\'enon maps is the set of points that escape to infinity under forward iterates, called the {\it escaping set}. This set is exactly the set of points where the Green's function is positive. Hubbard and Oberste-Vorth in \cite{HOV} gave the analytic description of the escaping set and constructed an intermediate cover of this set which is biholomorphic to $\mathbb{C}\times\mathbb{C}\backslash\mathbb{\overline{D}}$, here $\mathbb{D}$ is the unit disk. The motivation behind considering this particular covering space was to analytically extend the B\"{o}ttcher's function (also see \cite{MNTU}). This intermediate cover turns out to be an important tool to study several rigidity results for H\'enon maps (see \cite{BRT},\cite{Pal}). 

\medskip
\no 
The dynamical properties of the skew products of H\'enon maps fibered over a compact set were studied in \cite{PV}. For every $\lambda\in M$ and every $n\geq 1$, define
\[
H_\lambda^n=H_{\sigma^{n-1}(\lambda)}\circ H_{\sigma^{n-2}(\lambda)}\circ\cdots\circ H_\lambda.
\]
In the compact case, for every $\lambda\in M$, the sets of {\it escaping} and {\it non-escaping} points are defined as
\[
I_\lambda^+ =
\left\{ (x,y) \in \mathbb{C}^2 :\|H_\lambda^n(x,y)\| 
\to \infty \text{ as } n \to \infty \right\},
\]
and
\[
K_\lambda^+ =
\left\{ (x,y) \in \mathbb{C}^2 :
\left\{ H_\lambda^n(x,y)\right\}_{n \ge 0}
\text{ is bounded} \right\},
\]
respectively. It turns out that $H_\lambda(I_\lambda^+)=I_{\sigma(\lambda)}^+$, $H_\lambda(K_\lambda^+)=K_{\sigma(\lambda)}^+$ and $I^+_\lambda=\mathbb{C}^2\backslash K_\lambda^+$. Motivated by the filtration properties of H\'enon maps (see \cite{BS1}), Pal-Verma in \cite{PV} proved the existence of a uniform filtration radius $R>0$, independent of $\lambda$, for which the map \eqref{skew} possesses similar filtration properties. For $R>0$, define 
\[
V_R =\{(x,y) \in \mathbb{C}^2 :\max\{ \vert x\vert,  \vert y \vert\}\leq R \},
\] 
\[
V_R^+=\{(x,y) \in \mathbb{C}^2 :\max\{ \vert x \vert,R\} < \vert y \vert \},
\]
and
\[
V_R^-=\{(x,y) \in \mathbb{C}^2 : \max\{\vert y \vert,R\} < \vert x \vert \}.\]
Then, for each $\lambda\in M$, 
\[
I_\lambda^+=\bigcup_{n\geq 0}(H_\lambda^n)^{-1}(V_R^+),
\]
where $\{(H_\lambda^n)^{-1}(V_R^+)\}_{n\geq 0}$ is an increasing sequence of open sets (see \cite[Lemma 2.1]{PV}). 

\medskip
\no
The main objective of this paper is to extend the analytic description of the escaping set given by Hubbard and Oberste-Vorth to the skew product setting. For each $\lambda\in M$, we construct a covering space for the escaping set $I_\lambda^+$.\\
Moreover, we consider the special case where $M=\overline{\mathbb{D}}$. Assume that $\sigma:\overline{\mathbb{D}}\to\overline{\mathbb{D}}$ is a continuous map whose restriction to $\mathbb{D}$ is an automorphism of $\mathbb{D}$. We also assume that the coefficients $c_{\lambda,i}$, for $0 \le i \le d-2$, of the polynomial $q_\lambda$, together with $\delta_\lambda$, depend holomorphically on $\mathbb{D}$ and extend continuously to $\overline{\mathbb{D}}$. Under these assumptions, we define the {\it global escaping set} by
\[
I^+ =
\left\{ (\lambda,x,y) :
\lambda \in \mathbb{D},\ (x,y) \in I_\lambda^+ \right\}.
\]
Since $H_\lambda(I_\lambda^+)=I^+_{\sigma(\lambda)}$, it follows immediately that $H(I^+)=I^+$. We study the analytic structure of the global escaping set.
\begin{thm}\label{tmh1}
For each $\lambda \in M$, we have the following
\begin{itemize}
    \item[(i)] the fundamental group of $I_\lambda^+$ is isomorphic to $\mathbb{Z}\left[1/d\right]$,
    \item[(ii)] the covering manifold $\hat{I}_\lambda^+$ of $I_\lambda^+$ corresponding to the subgroup $\mathbb{Z}$ is biholomorphic to $\mathbb{C}\times\mathbb{C}\backslash\mathbb{\overline{D}}$.
\end{itemize}  
Furthermore, if $M=\mathbb{\overline{D}}$, then
\begin{itemize}
    \item[(iii)] the fundamental group of the global escaping set $I^+$ is isomorphic to $\mathbb{Z}\left[1/d\right]$,
    \item[(iv)] the analytic covering manifold $\hat{I}^+$ with fundamental group isomorphic to $\mathbb{Z}$ is biholomorphic to $\mathbb{D}\times\mathbb{C}\times\mathbb{C}\backslash\mathbb{\overline{D}}$,
    \item[(v)] the skew product $H$ lifts to a holomorphic map $\tilde{H}:\mathbb{D} \times \mathbb{C} \times \mathbb{C}\backslash\mathbb{\overline{D}}\to \mathbb{D} \times \mathbb{C} \times \mathbb{C}\backslash\mathbb{\overline{D}}$ given by 
\[
\tilde{H}(\lambda,s,t) = \left(\sigma(\lambda),\left(\frac{\delta_\lambda}{d}\right)s+Q(\lambda,t),t^d\right),
\]
where, for each $\lambda\in M$, $Q_\lambda(t)=Q(\lambda,t)$ is a polynomial in $t$ of degree $d+1$.
\end{itemize}
\end{thm}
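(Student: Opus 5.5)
The plan follows Hubbard--Oberste-Vorth \cite{HOV}, carried out uniformly in $\lambda$ using the uniform filtration radius $R$ of Pal--Verma \cite[Lemma 2.1]{PV}.

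\textbf{Step 1: a fiberwise B\"ottcher coordinate on $V_R^+$.} On $V_R^+$, write $H_\lambda^n(x,y)=(x_n,y_n)$. Then $y_{n+1}=y_n^d\bigl(1+O(|y_n|^{-2})+O(|x_n|/|y_n|^d)\bigr)$, with constants uniform in $\lambda$ because $M$ is compact and the coefficients $c_{\lambda,i},\delta_\lambda$ are continuous. Hence the infinite product
\[
\phi_\lambda(x,y)=\lim_{n\to\infty} y_n^{1/d^n}=y\prod_{k\ge0}\Bigl(\frac{y_{k+1}}{y_k^d}\Bigr)^{1/d^{k+1}}
\]
converges uniformly on $V_R^+$ after enlarging $R$. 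It gives a holomorphic map $\phi_\lambda:V_R^+\to\mathbb{C}\setminus\overline{\mathbb{D}}$ satisfying $\phi_{\sigma(\lambda)}\circ H_\lambda=\phi_\lambda^d$ and $\log|\phi_\lambda|=G_\lambda^+$. It depends continuously on $\lambda$, and holomorphically in $\lambda$ when $M=\overline{\mathbb{D}}$, since uniform limits of holomorphic functions are holomorphic. The fibers of $\phi_\lambda$ on $V_R^+$ are graphs over the disc $|x|<|y|$, which are contractible.

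\textbf{Step 2: topology, proving (i) and (iii).} On $V_R^+$ one has $\phi_\lambda\approx y$, so $V_R^+$ deformation retracts onto a circle and $\pi_1(V_R^+)=\mathbb{Z}$. Since $I_\lambda^+=\bigcup_n (H_\lambda^n)^{-1}(V_R^+)$ is an increasing union, $\pi_1(I_\lambda^+)$ is the direct limit of the groups $\pi_1\bigl((H^n_\lambda)^{-1}(V_R^+)\bigr)\cong\pi_1(V_{R}^+)\cong\mathbb{Z}$. I will show that the inclusion $(H_\lambda^n)^{-1}(V_R^+)\hookrightarrow (H_\lambda^{n+1})^{-1}(V_R^+)$ corresponds to multiplication by $d$, because $H_{\sigma^n(\lambda)}$ maps $V_R^+$ into itself and raises $\phi$ to the $d$-th power. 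This gives $\varinjlim(\mathbb{Z}\xrightarrow{d}\mathbb{Z}\to\cdots)=\mathbb{Z}[1/d]$. For $I^+$ the same argument runs with $\mathbb{D}\times V_R^+$, which carries the extra contractible factor $\mathbb{D}$; since $\sigma$ is an automorphism of $\mathbb{D}$, the global exhaustion $H^{-n}(\mathbb{D}\times V_R^+)$ behaves identically.

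\textbf{Step 3: the covering space, proving (ii), (iv) and (v).} Define $\hat I_\lambda^+$ as the set of pairs $(p,\zeta)$ such that $\zeta^{d^n}=\phi_{\sigma^n\lambda}(H^n_\lambda p)$ for all large $n$. On this cover, $\phi_\lambda$ extends to a holomorphic map $\hat\phi_\lambda:\hat I_\lambda^+\to\mathbb{C}\setminus\overline{\mathbb{D}}$ that is a submersion, because $G^+_\lambda$ has no critical points near infinity after pulling back. Its fibers are increasing unions of the contractible graph-discs, pulled back by iterates, and are therefore copies of $\mathbb{C}$, as in HOV's argument via the $x$-coordinate and normalized expansion. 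The resulting fibration over $\mathbb{C}\setminus\overline{\mathbb{D}}$ with fiber $\mathbb{C}$ is trivialized holomorphically, either by Stein theory (the base is open Riemann surface, so the line bundle is trivial) or directly. The direct route constructs a second coordinate $s=\lim_n (\delta^{(n)}_\lambda)^{-1}d^{\,n}\cdot(\text{something like } x_n\text{-correction})$ that is affine on fibers, namely the HOV function $\lim \frac{d^n}{\delta^{(n)}}\bigl(x_n - \text{polynomial in } y_n^{1/d^n}\bigr)$.

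Computing $\hat\phi\circ\hat H=\hat\phi^d$ and the transformation rule of $s$ then yields the lift in (v): $t\mapsto t^d$, and $s\mapsto(\delta_\lambda/d)s+Q_\lambda(t)$, where $Q_\lambda$ is a polynomial of degree $d+1$ coming from the truncation of the expansion $x\approx y\,\cdot$(series in $1/y$). For $M=\overline{\mathbb{D}}$ all limits are holomorphic in $\lambda$, which gives $\hat I^+\cong\mathbb{D}\times\mathbb{C}\times\mathbb{C}\setminus\overline{\mathbb{D}}$.

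\textbf{Main obstacle.} I expect the hardest step to be Step 3: constructing the fiber coordinate $s$ with uniform convergence in the nonautonomous setting. The Jacobians $\delta_{\sigma^k\lambda}$ vary along the orbit and may approach $0$, so the normalization by $\delta^{(n)}_\lambda=\prod_{k<n}\delta_{\sigma^k\lambda}$ needs care. I would first attempt to bound the error terms by $|y_n|^{-1}$, which is super-exponentially small, so that it dominates any product of $\delta$'s bounded above uniformly on the compact set $M$.
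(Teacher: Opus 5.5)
\textbf{There is a genuine gap in your Step 3, which carries (ii), (iv) and (v).} Your Steps 1 and 2 are fine, and Step 2 is a legitimate alternative to the paper. You compute $\pi_1(I^+_\lambda)$ as the direct limit of $\mathbb{Z}\xrightarrow{d}\mathbb{Z}\xrightarrow{d}\cdots$ over the exhaustion $(H^n_\lambda)^{-1}(V_R^+)$. The paper instead integrates $\omega_\lambda=\mathrm{d}\phi_\lambda/\phi_\lambda$ to get an explicit isomorphism $f_\lambda$, which it reuses to build the cover. Your root model of the cover is equivalent to the paper's path model, since the monodromy of $\zeta$ along $C$ is $e^{2\pi i f_\lambda([C])}$. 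It must, however, carry the topology of local holomorphic branches. In the subspace topology of $I^+_\lambda\times\mathbb{C}$ the fibre $\zeta_0\cdot\mu_{d^\infty}$ (all $d^n$-th roots of unity, $n\ge 0$) is dense in a circle, not discrete.

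The three routes you offer in Step 3 each fail as stated:
\begin{enumerate}
\item From ``increasing union of contractible graph-discs'' you cannot conclude the fibres are $\mathbb{C}$. An increasing union of discs can itself be a disc, so you need a quantitative exhaustion.
\item The Stein shortcut presupposes that $\hat\phi_\lambda$ is a locally trivial bundle. A non-proper holomorphic submersion with all fibres $\cong\mathbb{C}$ need not be locally trivial: remove from $\mathbb{C}\times\mathbb{P}^1$ the graph of a continuous non-holomorphic $g$ and project to the first factor. Even granting a trivialization, it only yields a lift $(s,t)\mapsto(a(t)s+b(t),t^d)$ with $a,b$ arbitrary holomorphic functions on $|t|>1$, which is not (v).
\item Your direct formula is wrong as written. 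Since $x_n\approx y_{n-1}$ has size about $|\phi_\lambda|^{d^{n-1}}$, no bounded-degree polynomial in $y_n^{1/d^n}\approx\phi_\lambda$ can cancel it. Moreover, a correct fibre coordinate has $\partial s/\partial x\approx t$, so $\frac{d^n}{\delta^{(n)}_\lambda}(x_n-\cdots)$ is off by a factor comparable to $y_n$. No convergence argument is given for any corrected version.
\end{enumerate}
A smaller point: the bound you need is that $|\delta_\lambda|$ is bounded \emph{below} on compact $M$, not above.

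\textbf{The missing idea is the paper's explicit normal-form construction.}
\begin{enumerate}
\item On $U^+_\lambda=\{|\phi_\lambda|>M\max\{R,|x|\}\}$, write $y=\theta_\lambda(x,t)$ and set $\tilde s=t\int_0^x\partial_t\theta_\lambda(\xi,t)\,\mathrm{d}\xi$. The chart $(x,y)\mapsto(\tilde s,t)$ has Jacobian exactly $t$.
\item Comparing Jacobians ($\delta_\lambda$ for $H_\lambda$, $dt^{d-1}$ for $t\mapsto t^d$) forces $\tilde s_1=(\delta_\lambda/d)\tilde s+C_\lambda(t)$. Here $C_\lambda(t)=t^{d+1}+\cdots$ is holomorphic on $|t|>MR$.
\item The negative Laurent part is removed by $s=\tilde s+R_\lambda(t)$, where $R_\lambda(t)=\sum_{k\ge0}\frac{d^{k+1}}{\delta_\lambda\cdots\delta_{\sigma^k(\lambda)}}C^-_{\sigma^k(\lambda)}(t^{d^k})$. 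This converges since $C^-(t^{d^k})=O(|t|^{-d^k})$, and it is what produces the degree-$(d+1)$ polynomial $Q_\lambda$.
\item $\hat\psi_\lambda$ is extended to $\hat I^+_\lambda$ by the functional equation, not as a limit. Bijectivity of $\hat\Phi_\lambda=(\hat\psi_\lambda,\hat\phi_\lambda)$ follows from injectivity on $\hat U^+_\lambda$ together with counting the $d^n$ preimages under $\hat H^n_\lambda$ and $G^n_\lambda$.
\item Surjectivity rests on the Rouch\'e estimate $\Phi_\lambda(U^+_\lambda)\supset\{|t|>\tilde M,\ |s|<\rho|t|^2\}$. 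The $s$-component of $G^n_\lambda$ grows at most like $K^n|t|^{d^{n-1}(d+1)}=o(|t|^{2d^n})$, so the sets $(G^n_\lambda)^{-1}(W_{\sigma^n(\lambda)})$ exhaust $\mathbb{C}\times\mathbb{C}\setminus\overline{\mathbb{D}}$.
\end{enumerate}
The estimate in the last item is exactly what shows your fibres are $\mathbb{C}$ rather than discs.
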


\medskip
\no
The existence of an intermediate cover has important applications. For classical H\'enon map, Bonnot-Radu-Tanase in \cite{BRT} investigated the question---\textit{To what extent does the escaping set of H\'{e}non maps determine the underlying H\'{e}non map?} Using the techniques of Bousch \cite{Bousch} and the intermediate cover constructed in \cite{HOV}, they proved that two H\'{e}non maps $F_1$ and $F_2$ of degree $2$ are equal if and only if their escaping sets are biholomorphic. Further, Pal in \cite{Pal} generalized this result and proved the rigidity of the escaping sets of H\'enon maps of degree $\geq 2$. Two H\'{e}non maps $F_1$ and $F_2$ of same degree and with biholomorphic escaping sets are equal up to composition with linear maps i.e., $F_1= L_1\circ F_2\circ L_2$, where $L_1$ and $L_2$ are linear automorphisms on $\mathbb{C}^2$. Since we have constructed an intermediate covering space for the global escaping set, it is natural to ask whether a similar rigidity result holds in the skew product setting. 
 
\begin{thm}\label{thm2}
Let $H,F:\mathbb{\overline{D}}\times\mathbb{C}^2\to \mathbb{\overline{D}}\times\mathbb{C}^2$ defined by $H:(\lambda,x,y) \mapsto (\lambda,y,q_{H,\lambda}(y)-\delta x)$ and $F:(\lambda,x,y) \mapsto (\lambda,y,q_{F,\lambda}(y)-\delta x),$ where \[q_{H,\lambda}(y)=y^d+a_{H,\lambda,d-2}y^{d-2}+\cdots +a_{H,\lambda,0}\] and \[q_{F,\lambda}(y)=y^d+a_{F,\lambda,d-2}y^{d-2}+\cdots +a_{F,\lambda,0},\] are two families of skew products of H\'{e}non maps. Suppose there exists a biholomorphism between the global escaping sets of the two families which lifts to an automorphism on the covering space $\mathbb{D}\times\mathbb{C}\times\mathbb{C}\backslash\mathbb{\overline{D}}$, then
\[
F= L_1 \circ H \circ L_2
\]
where $L_1(\lambda,x,y)=(A_1(\lambda),\alpha \alpha_1 x,\alpha^d y)$, and $L_2(\lambda,x,y)=(A_1^{-1}(\lambda),\alpha^{-d} x,\alpha^{-1} y)$, with $A_1:\mathbb{D}\to \mathbb{D}$ is a biholomorphism, $\alpha^{d^2-1}=1$ and $\alpha^{d-1}_1=1$.
\end{thm}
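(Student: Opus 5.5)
The approach follows Bonnot--Radu--Tanase and Pal, carried out fiberwise. By Theorem~\ref{tmh1}(iv),(v) the global escaping sets $I^+_H$ and $I^+_F$ have intermediate covers biholomorphic to $\Omega=\mathbb{D}\times\mathbb{C}\times\mathbb{C}\backslash\overline{\mathbb{D}}$. In these coordinates the lifts take the form
\[
\tilde H(\lambda,s,t)=\left(\lambda,\tfrac{\delta}{d}s+Q_H(\lambda,t),t^d\right),\qquad \tilde F(\lambda,s,t)=\left(\lambda,\tfrac{\delta}{d}s+Q_F(\lambda,t),t^d\right),
\]
since here $\sigma=\mathrm{id}$. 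Write $\Phi:I^+_H\to I^+_F$ for the given biholomorphism and $\tilde\Phi$ for its lift, which is an automorphism of $\Omega$.

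\textbf{Step 1: normal form for $\tilde\Phi$.} First I determine $\mathrm{Aut}(\Omega)$ in a form good enough for our purposes. For fixed $(\lambda,s)$, the map $t\mapsto$ (third component of $\tilde\Phi$) is holomorphic on $\mathbb{C}\backslash\overline{\mathbb{D}}$. For fixed $(\lambda,t)$, the map $s\mapsto$ (third component) is an entire function with values in $\mathbb{C}\backslash\overline{\mathbb{D}}$, so it is constant by Liouville/Picard. Likewise, for fixed $(s,t)$, the map $\lambda\mapsto$ (third component) must respect the product structure. Using Kobayashi hyperbolicity of $\mathbb{D}\times(\mathbb{C}\backslash\overline{\mathbb{D}})$ against the $\mathbb{C}$-direction, I expect to show that $\tilde\Phi$ preserves the foliation by $\mathbb{C}$-lines. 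The induced map on $\mathbb{D}\times(\mathbb{C}\backslash\overline{\mathbb{D}})$ is then an automorphism of this product of hyperbolic planar domains. Since the factors are non-biholomorphic, it splits as $(A_1(\lambda),\gamma t)$ or $(A_1(\lambda),\gamma/t)$-type maps. The latter does not map $\mathbb{C}\backslash\overline{\mathbb{D}}$ to itself, so in fact $t\mapsto\gamma t$ with $|\gamma|=1$. Finally, on each fiber $\tilde\Phi$ is an automorphism of $\mathbb{C}$ in $s$, so
\[
\tilde\Phi(\lambda,s,t)=\bigl(A_1(\lambda),\,a(\lambda,t)s+b(\lambda,t),\,\gamma t\bigr),
\]
with $a$ nonvanishing.

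\textbf{Step 2: equivariance with respect to the dynamics.} Next I would check that $\tilde\Phi$ conjugates the deck groups; this is automatic, since $\Phi$ is a homeomorphism and so induces an isomorphism $\mathbb{Z}[1/d]\to\mathbb{Z}[1/d]$ of fundamental groups. The deck group of $\Omega\to I^+$ is the $\mathbb{Z}$ acting by $t\mapsto e^{2\pi i k/\ldots}$-type shifts twisted in $s$, following the explicit HOV description. Following Pal, the key point is that $\Phi$ need not commute with $H,F$. What must hold instead is that $\tilde\Phi$ descends, and this forces compatibility of $\tilde\Phi$ with the covering relations coming from $\tilde H,\tilde F$; concretely, the B\"ottcher coordinate $t=\phi^+$ and the $s$-coordinate are defined via limits of $H^n$. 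Then I would compare $\tilde F$ with $\tilde\Phi\circ\tilde H\circ\tilde\Phi^{-1}$. Both maps are of the form "affine in $s$, $t\mapsto t^d$" up to the rotation $\gamma$, which gives $\gamma^{d}=\gamma\cdot(\text{root})$. Combining this with the $\mathbb{Z}$-deck action yields the constraint $\alpha^{d^2-1}=1$, with $\alpha$ derived from $\gamma$.

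\textbf{Step 3: matching coefficients.} Substituting the normal form into the conjugacy relation and comparing the polynomials $Q_F(A_1(\lambda),\gamma t)$ with the transformed $Q_H(\lambda,t)$, which has degree $d+1$ in $t$, forces $a$ to be constant on fibers and $b$ to be polynomial in $t$. I then express $Q_H$ and $Q_F$ through the coefficients $a_{H,\lambda,i}$ and $a_{F,\lambda,i}$ via the HOV recursive formulas. Comparing the top-degree terms pins down the second scaling $\alpha_1$ with $\alpha_1^{d-1}=1$, because the $s$-coordinate is asymptotic to $y$ up to a $(d-1)$-st root of unity choice. The remaining coefficients then give $q_{F,A_1(\lambda)}(y)$ as the rescaled $q_{H,\lambda}$, which is exactly $F=L_1\circ H\circ L_2$.

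The main obstacle is Step 1: proving that an arbitrary automorphism of $\mathbb{D}\times\mathbb{C}\times\mathbb{C}\backslash\overline{\mathbb{D}}$ preserves the $\mathbb{C}$-fibers and is affine in $s$. The dependence of $a$ and $b$ on $t$ (and on $\lambda$) is not controlled a priori. To handle it, I would first try the Liouville argument: the $t$- and $\lambda$-components are bounded or hyperbolic-valued along $\mathbb{C}$-lines, hence constant on them.
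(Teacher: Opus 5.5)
There is a genuine gap: your Steps 2 and 3 rest on a relation that the hypotheses do not provide. You note correctly that $\Phi$ need not intertwine $H$ and $F$. You then compare $\tilde F$ with $\tilde\Phi\circ\tilde H\circ\tilde\Phi^{-1}$ and ``substitute the normal form into the conjugacy relation''. No such relation exists: the only assumption is that $I^+_H$ and $I^+_F$ are biholomorphic via a map lifting to $\Omega$, and the conclusion $F=L_1\circ H\circ L_2$ is not a conjugacy. So the coefficient matching in Step 3 has nothing to act on. Even formally, a conjugacy would give only $\gamma^{d-1}=1$ from the $t$-coordinates, so the exponent $d^2-1$ has to come from somewhere else.

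Three further points are off. The deck group of $\Omega\to I^+$ is $\mathbb{Z}[1/d]/\mathbb{Z}$, not $\mathbb{Z}$; $\mathbb{Z}$ is $\pi_1$ of the cover. The root of unity $\alpha_1$ does not come from an asymptotic of $s$, which behaves like $xt$, not like $y$. In the paper it is tied to the Jacobians: $\delta_F=\alpha_1\delta_H$, with $\delta_H^{d-1}=\delta_F^{d-1}$ forced by the growth comparison described below. Finally, $b$ comes out independent of $t$, not polynomial in $t$.

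The missing idea is to exploit the deck group, which you observe $\tilde\Phi$ must normalize but never use. It encodes $Q_H$ explicitly. From $\tilde H\circ g_{1/d^{n+1}}=g_{1/d^n}\circ\tilde H$ one computes
\[
g_{k/d^n}(\lambda,s,t)=\Bigl(\lambda,\; s+\frac{d}{\delta}\sum_{l\geq 0}\Bigl(\frac{d}{\delta}\Bigr)^{l}\bigl[Q_\lambda(t^{d^l})-Q_\lambda((\zeta t)^{d^l})\bigr],\; \zeta t\Bigr),\qquad \zeta=e^{2\pi i k/d^n},
\]
which is a finite sum, since $\zeta^{d^l}=1$ for $l\geq n$. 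Then:
\begin{enumerate}
\item Since $\tilde\Phi\, g_{H,k/d^n}\,\tilde\Phi^{-1}$ is a deck transformation for $F$, the $t$-coordinates show that the induced automorphism of $\mathbb{Z}[1/d]/\mathbb{Z}$ is the identity.
\item The $s$-coordinates show $a=a(\lambda)$. They also show that $a(\lambda)$ times the $H$-sum at $(\lambda,t)$ and the $F$-sum at $(A_1(\lambda),\gamma t)$ differ by $b(\lambda,\zeta_n t)-b(\lambda,t)$, where $\zeta_n=e^{2\pi i/d^n}$. This difference stays bounded as $n\to\infty$.
\item The top terms are of the form $(d/\delta)^n(1-e^{2\pi i/d})\,t^{d^{n-1}(d+1)}$, and comparing them gives $\gamma^{d^{n-1}(d+1)}\to a$, hence $a^{d-1}=1$.
\item Because $a$ is a repelling fixed point of $z\mapsto z^d$, $\gamma^{d^n(d+1)}=a$ for large $n$. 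Precomposing $\tilde\Phi$ with a deck transformation then yields $\gamma^{d^2-1}=1$.
\item Writing $Q(\lambda,t)=t^{d+1}+\sum_{i=1}^{d}D_{\lambda,i}t^{d-i}$, the next-order terms give $\gamma^{(i+1)d}D_{H,\lambda,i}=D_{F,A_1(\lambda),i}$, and hence $b$ is independent of $t$.
\item You still have to convert these into relations between the coefficients of $q_{H,\lambda}$ and $q_{F,A_1(\lambda)}$. This is done by inverting the B\"ottcher expansion $t=y+L_1/y+\cdots$ and comparing with $y\,(q_\lambda(y)/y^d)^{1/d}$.
\end{enumerate}

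Your Step 1 is fine in substance. Liouville removes the $s$-dependence of the first and third components. Automorphisms of $\mathbb{D}\times(\mathbb{C}\backslash\overline{\mathbb{D}})$ do split, for instance by extending across $t=\infty$ to a bidisk automorphism preserving $\mathbb{D}\times\{0\}$. But Step 1 is not where the difficulty lies.
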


\medskip
\no
In the final part of the paper, we consider the skew products over non-compact base space $M=\mathbb{C}$. 
Let $H:\mathbb{C}\times \mathbb{C}^2 \to \mathbb{C}\times \mathbb{C}^2$ defined by 
\begin{equation}\label{skewhenon}
    H(\lambda,x,y)=(c\lambda,H_\lambda(x,y)),
\end{equation}
where $H_\lambda(x,y)=(y,p_\lambda(y)-\delta x)$ is a H\'enon map of degree $d\geq 2$ with $p_\lambda(y)=c_d y^d +c_{\lambda,d-1} y^{d-1}+ \cdots +c_{\lambda,0}$. The coefficients $c_{\lambda,j}=c_j(\lambda)$, for $0 \leq j\leq d-1$, are polynomials in $\lambda$.
The inverse of $H$ is given by
\[
H^{-1}(\lambda,x,y)=\left(c^{-1}\lambda,H^{-1}_{c^{-1}\lambda}(x,y)\right)=\left(c^{-1}\lambda,\left(p_{c^{-1}\lambda}(x)-y\right)/\delta,x\right).
\]
For $0 \leq j \leq d$, let $l_j=\deg(c_j)$ and $\deg(H) = \tilde{d}= \max \{l_j +j : 0\leq j \leq d \}.$ Then for $n\geq 2$, $\deg(H^n)=\tilde{d}d^{n-1}$. The dynamics of the maps of the form \eqref{skewhenon} is studied in \cite{BP}. For $|c|>1$ and $R>0$, we define
\[
V_R^+ = \{(\lambda,x,y) \in \mathbb{C}^3 : |y| > \max \{R, |x|, |\lambda|^{\tilde{d}+1} \}\},
\]
and
\[
V_R^-= \{(\lambda,x,y) \in \mathbb{C}^3 : |x| > \max \{R, |y|\}, |\lambda|<1 \}.
\]
Define
\[
U_H^+ = \bigcup_{n\geq 0} H^{-n}(V^+_R).
\]
For quadratic polynomial automorphisms of $\mathbb{C}^3$, $U_H^+$ is studied by Coman-Forn\ae ss in \cite{CF}. They observed that for sufficiently large $R>0$, $U_H^+$ contains points that escape to infinity at super-exponential rate under forward iteration.  Unlike the case of H\'enon maps, where there are no points that escape to infinity at exponential rate, they proved that the set $\mathbb{C}^3\backslash U_H^+$ not only contains points with bounded forward iterates but also points whose forward iterates escape to infinity at a slower rate (exponential rate). Moreover, $U_H^+$ is precisely the set where the Green's function is positive (see \cite{CF}, \cite{BP}). This makes $U_H^+$ a natural analogue of the classical escaping set. Now we record the following theorem.

\begin{thm}\label{thm3}
    The fundamental group of $U_H^+$ is isomorphic to $\mathbb{Z}\left[1/d\right]$ and the intermediate cover, $\hat{U}_H^+$, of $U_H^+$ whose fundamental group is isomorphic to $\mathbb{Z}$ is biholomorphic to $\mathbb{C}^2\times\mathbb{C}\backslash\mathbb{\overline{D}}$. Moreover, $H$ lifts to the map $\tilde{H}:\mathbb{C}^2\times\mathbb{C}\backslash\mathbb{\overline{D}}:\mathbb{C}^2\times\mathbb{C}\backslash\mathbb{\overline{D}}$ given by
    \[
    \tilde{H}(\lambda,s,t)=\left(c\lambda, \left(\frac{\delta}{d}\right)s+Q_H(\lambda,t),t^d\right),
    \]
    where, for each $\lambda\in M$, $Q_H$ is a polynomial in $t$ of degree $d+1$.
\end{thm}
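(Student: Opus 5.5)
We follow the Hubbard--Oberste-Vorth strategy, carried out fibrewise with uniform estimates in $\lambda$. For $U_H^+$ we use the region $V_R^+$ in $\mathbb{C}^3$ that involves $\lambda$. The first step is to construct a B\"ottcher-type coordinate on $V_R^+$. For $(\lambda,x,y)\in V_R^+$ write $H^n(\lambda,x,y)=(c^n\lambda,x_n,y_n)$, and set
\[
\phi^+(\lambda,x,y)=\lim_{n\to\infty} \bigl(y_n/c_d^{\,?}\bigr)^{1/(\tilde d d^{n-1})},
\]
or, more precisely, a suitably normalized limit $\lim (y_{n})^{1/d^n}$ taken as the telescoping product $y\prod_{k\ge0}\bigl(y_{k+1}/y_k^d\bigr)^{1/d^{k+1}}$. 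The estimates needed are:
\begin{itemize}
\item[(a)] $|y_{k+1}/y_k^d - c_d|$ is small on $V_R^+$, uniformly in $\lambda$;
\item[(b)] the condition $|y|>|\lambda|^{\tilde d+1}$ is preserved under $H$.
\end{itemize}
Estimate (b) holds because $|y_1|\approx|c_d||y|^d$ dominates $|c\lambda|^{\tilde d+1}$ once $|y|$ is large. Each factor then has a well-defined branch near $c_d$, the product converges uniformly, and $\phi^+$ is holomorphic on $V_R^+$ with $\phi^+\circ H=c_d^{1/(d-1)}$-normalized $(\phi^+)^d$. After conjugating by a linear change in $y$ we may take $c_d=1$. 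Since $\phi^+/y\to1$ uniformly, $\phi^+$ restricted to each slice $\{\lambda,x\}\times\{|y|>\max(R,|x|,|\lambda|^{\tilde d+1})\}$ is injective onto a region containing $\{|t|>r\}$. Thus $V_R^+$ deformation retracts to a circle bundle, and $\pi_1(V_R^+)\cong\mathbb{Z}$, generated by the loop in $y$.

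The second step computes the fundamental group. Since $U_H^+=\bigcup_n H^{-n}(V_R^+)$ is an increasing union and $H$ is an automorphism of $\mathbb{C}^3$, we have $\pi_1(U_H^+)=\varinjlim \pi_1(H^{-n}V_R^+)$, with each term isomorphic to $\mathbb{Z}$. The inclusion $V_R^+\hookrightarrow H^{-1}(V_R^+)$, read through $H$, is multiplication by $d$, because $y_1\sim y^d$. The direct limit of $\mathbb{Z}\xrightarrow{d}\mathbb{Z}\xrightarrow{d}\cdots$ is $\mathbb{Z}[1/d]$. Checking that each inclusion really is the map $\times d$ is where the retraction onto the $y$-circle matters.

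The third step constructs the intermediate cover $\hat U_H^+$, which corresponds to $\mathbb{Z}\subset\mathbb{Z}[1/d]$. Its points are pairs $(p,\tau)$, where $p\in U_H^+$ and $\tau$ is a compatible choice of $d^n$-th roots of $\phi^+(H^n p)$. On $\hat U_H^+$ the map $\phi^+$ extends as a single-valued function
\[
\hat\phi^+(p)=\bigl(\phi^+(H^n p)\bigr)^{1/d^n},
\]
holomorphic, with values in $\mathbb{C}\setminus\overline{\mathbb{D}}$ after rescaling by $G^+>0$, where $\log|\hat\phi^+|=G^+$. The map $(\lambda,\hat\phi^+)$ is a holomorphic submersion $\hat U_H^+\to\mathbb{C}\times(\mathbb{C}\setminus\overline{\mathbb{D}})$. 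Its fibres are the level sets of $\phi^+$ in the escaping part; these are leaves of a Riemann-surface lamination and should be copies of $\mathbb{C}$. To trivialize, we follow HOV. On $V_R^+$ the fibre over $(\lambda,t)$ is parametrized by $x$, which ranges over a disk of radius about $|t|$. Pulling back by $H^{n}$ exhausts the fibre by increasing disks, whose conformal moduli tend to infinity, since the Jacobian of $H$ is the constant $\delta$ and the disks grow by factors of $d/|\delta|$ relative to one another. This gives the fibre $\cong\mathbb{C}$.

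The fourth step builds the global coordinate. We choose the holomorphic fibre coordinate $s$ normalized by the equivariance
\[
s\circ H=\frac{\delta}{d}\,s+Q_H(\lambda,t).
\]
Concretely, we define $s=\lim_n (d/\delta)^n\bigl(x_n\text{-type expression}-\text{polynomial correction in }t\bigr)$, exactly as HOV do with the Jacobian factor $\delta/d$ coming from $\partial\phi^+/\partial y\cdot$Jacobian considerations. The correction polynomial $Q_H$ of degree $d+1$ in $t$ arises from expanding $x_1=y$ in terms of $\phi^+$, using $y=\phi^+ +O(1)$ with holomorphic dependence on $\lambda$. This yields the biholomorphism $\hat U_H^+\cong\mathbb{C}^2\times(\mathbb{C}\setminus\overline{\mathbb{D}})$, given by $p\mapsto(\lambda,s,t)$, together with the stated lift $\tilde H$. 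In this new coordinate the $t$-component is simply $t\mapsto t^d$.

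The main obstacle is non-compactness in $\lambda$. All the estimates in the first and fourth steps must be uniform as $|\lambda|\to\infty$, and the convergence of the limit defining $s$ must hold on all of $\mathbb{C}$. This is exactly what the weight $|\lambda|^{\tilde d+1}$ in $V_R^+$ is designed for. We expect to handle it by proving $|c_{\lambda,j}y^j|\le \epsilon|y|^d$ on $V_R^+$, using $l_j+j\le\tilde d$ and $|\lambda|<|y|^{1/(\tilde d+1)}$. We also need to check that the fibre is $\mathbb{C}$ and not a disk, which amounts to showing that the modulus grows unboundedly; this uses the contraction and expansion estimates uniformly in $\lambda$.
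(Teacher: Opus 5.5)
Your B\"ottcher function and your computation of $\pi_1(U_H^+)$ are fine. The direct limit of $\mathbb{Z}\xrightarrow{\times d}\mathbb{Z}\xrightarrow{\times d}\cdots$ via $H_*$ is a legitimate alternative to the paper's isomorphism $[C]\mapsto\frac{1}{2\pi i}\int_C\omega_H^+$. Your cover on which $\phi^+$ becomes single-valued also matches the paper in substance.

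The gap is in your fourth step, where the theorem actually lives: you never produce a fibre coordinate in which $H$ is affine. The ``$x_n$-type expression'' is left unspecified, and the natural reading fails. Along a level set $\{\phi^+=t\}$, use $\det DH_\lambda=\delta$, $t_1=t^d$ and $\partial\phi^+/\partial y\approx 1$. This gives $\partial x_1/\partial x\approx\delta/(d\,t^{d-1})$, hence $(d/\delta)^n\,\partial x_n/\partial x\approx t^{1-d^n}$. So $(d/\delta)^n x_n$ collapses the fibre direction instead of converging to a coordinate. Likewise, expanding $x_1=y\approx t$ alone gives a correction of degree $1$, not $d+1$.

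The missing idea, which the paper takes from HOV, is to weight $x$ by $t$ through a Jacobian normalization on each $\lambda$-slice of $U_R^+$:
\[
\tilde s=t\int_0^x\frac{\partial\theta_H^+}{\partial t}(\lambda,\xi,t)\,\mathrm{d}\xi,\qquad \mathrm{d}\tilde s\wedge\mathrm{d}t=t\,\mathrm{d}x\wedge\mathrm{d}y .
\]
Comparing determinants for $(\lambda,\tilde s,t)\mapsto(c\lambda,\tilde s_1,t^d)$ then forces $\partial\tilde s_1/\partial\tilde s\equiv\delta/d$. So $\tilde s_1=\frac{\delta}{d}\tilde s+C_H(\lambda,t)$ holds \emph{exactly}, with
\[
C_H(\lambda,t)=t^d\int_0^{\theta_H^+(\lambda,0,t)}\frac{\partial\theta_H^+}{\partial t}(c\lambda,\xi,t^d)\,\mathrm{d}\xi\approx t^{d+1},
\]
and this is where the degree $d+1$ comes from. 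One then removes the singular part with $R_H(\lambda,t)=\sum_{k\ge0}(d/\delta)^{k+1}C_H^-(c^k\lambda,t^{d^k})$ and sets $s=\tilde s+R_H$. This gives $s\circ H=\frac{\delta}{d}s+Q_H(\lambda,t)$, with $Q_H$ the polynomial part of $C_H$. Only then does a limit of your type make sense, namely $s=\lim_n\bigl[(d/\delta)^n\,\tilde s\circ H^n-\sum_{k<n}(d/\delta)^{k+1}Q_H(c^k\lambda,t^{d^k})\bigr]$.

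Your argument that the fibres of $(\lambda,\hat\phi^+)$ are copies of $\mathbb{C}$ is also incorrect as stated. If consecutive discs only grew by the factor $d/|\delta|$, nothing would grow when $|\delta|\ge d$. What actually happens is different. The slice of $U_R^+$ at level $t^{d^n}$ is a disc of size $\sim|t|^{2d^n}$ in the $\tilde s$-coordinate. Pulling back by $G_H^n$ only rescales by $(d/\delta)^n$ and translates by terms of size $\sim|t|^{(d+1)d^{n-1}}$, which are negligible because $2d>d+1$.

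This is how the paper gets surjectivity. First, $W_H=\hat\Phi_H^+(\hat U_R^+)$ contains $\{|s|<M_2|t|^2,\ |t|>M\max(1,|\lambda|^{\tilde d+1})\}$. Hence $\bigcup_n(G_H^n)^{-1}(W_H)=\mathbb{C}^2\times(\mathbb{C}\setminus\overline{\mathbb{D}})$. Injectivity of $\hat\Phi_H^+$, together with the count of $d^n$ preimages under both $\hat H^n$ and $G_H^n$, then gives bijectivity.

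Once you have the affine coordinate, you need neither a modulus estimate nor any triviality result for the level-set fibration. Without it, knowing that the fibres are $\mathbb{C}$ would still not give you a global holomorphic $s$ with the stated equivariance.
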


\medskip
\noindent
\textbf{Acknowledgment}: I would like to thank Ratna Pal for suggesting the problem statement and for her encouragement and support in completing this paper. I also gratefully acknowledge the University Grants Commission (UGC), India, for funding my research.

\section{Proof of Theorem \ref{tmh1}}
{\it Step 1.} In this step, for each $\lambda\in M$, we construct the B\"{o}ttcher function.
For $1\leq i\leq N$, let $proj_i:\mathbb{C}^N\to \mathbb{C}$ be the projection map defined by $proj_i(x_1,x_2,\cdots,x_N)=x_i$. For $(x,y) \in V_R^+$, 
\begin{equation}\label{eq2.1f}
    \frac{proj_2 \circ H_\lambda(x,y)}{y^d}=1+\frac{c_{\lambda,d-2}}{y^2}+\cdots+\frac{c_{\lambda,0}-\delta_\lambda x}{y^d}.
\end{equation}
For sufficiently large $R>0$, the quantity in \eqref{eq2.1f} lies in a simply connected neighborhood of $1$ in $\mathbb{C}^*$ for every $(x,y)\in V_R^+$. Thus, we could choose a branch of logarithm, $\beta_\lambda(x,y)$, on $V_R^+$, such that
\begin{equation}\label{eq2.2f}
proj_2 \circ H_\lambda(x,y)=y^d e^{\beta_\lambda(x,y)}.
\end{equation}
Since for $(x,y)\in V_R^+$, we have $H^{n-1}_\lambda(x,y) \in V_R^+$ for every $n\geq 1$, 
\begin{equation}
proj_2 \circ  H_{\sigma^{n-1}(\lambda)} (H^{n-1}_\lambda(x,y))=\left(proj_2\circ H^{n-1}_\lambda(x,y)\right)^d e^{\beta_{\sigma^{n-1}(\lambda)}(H^{n-1}_\lambda(x,y))}.
\end{equation}
Inductively, we get
\begin{equation}
proj_2 \circ H^n_\lambda(x,y)=y^{d^n}e^{\beta_{\sigma^{n-1}(\lambda)}(H^{n-1}_\lambda(x,y))+d\beta_{\sigma^{n-2}(\lambda)}(H^{n-2}_\lambda(x,y))+...+d^{n-1}\beta_\lambda(x,y)}.
\end{equation}
Let 
\[
\phi_{\lambda,n}(x,y)=\left(proj_2 \circ H_\lambda^n(x,y)\right)^{1/d^n}.
\]
Since M is compact, there exists a uniform bound for $\beta_\lambda$ (independent of $\lambda \in M$) on $V_R^+$. If necessary, we choose a bigger $R$ to ensure this bound holds. Thus, the sequence of holomorphic functions $\gamma_{\lambda,n}:V_R^+\to \mathbb{C}$ defined by
\begin{equation}
    \gamma_{\lambda,n}(x,y)=\frac{1}{d}\beta_\lambda(x,y)+\frac{1}{d^2}\beta_{\sigma(\lambda)}(H_\lambda(x,y))+...+\frac{1}{d^n}\beta_{\sigma^{n-1}(\lambda)}(H_\lambda^{n-1}(x,y)),
\end{equation}
converges uniformly on compact subsets of $V_R^+$ to a holomorphic function $\gamma_\lambda$. Therefore, the sequence of holomorphic functions $\{\phi_{\lambda,n}\}_{n=0}^\infty$ converges uniformly on compact subsets of $V_R^+$ to a holomorphic function $\phi_\lambda$, that is,
\begin{equation}
    \phi_\lambda(x,y)=\lim_{n \to \infty}\phi_{\lambda,n}(x,y)=ye^{\gamma_\lambda(x,y)}.
\end{equation}
\noindent
Since $\phi_{\sigma(\lambda),n}(H_\lambda(x,y))=\phi_{\lambda,n+1}(x,y)^d$, $\phi_\lambda$ satisfies the functional equation
\begin{equation*}
    \phi_{\sigma(\lambda)}(H_\lambda(x,y))=\phi_\lambda(x,y)^d.
\end{equation*}
Inductively, for $n\geq 1$, 
\begin{equation}\label{eqphi}
    \phi_{\sigma^n(\lambda)}(H_\lambda^n(x,y))=\phi_\lambda(x,y)^{d^n}.
\end{equation}

\medskip
\noindent
It follows from \eqref{eq2.1f} and \eqref{eq2.2f} that $e^{\beta_\lambda(x,y)} \approx 1$ as $y \to \infty$. Therefore, $\phi_\lambda(x,y) \approx y$ as $\|(x,y)\|\to \infty$ in $V_R^+$. 
Hence, we could choose large $R$ such that there exist constants $m_1,m_2>0$ such that 
\[
m_1 \leq \left|\frac{\phi_\lambda(x,y)}{y}\right| \leq M_2
\]
for $(x,y) \in V_R^+$.

\medskip
\no
{\it Step 2.} Using the techniques developed in \cite{MNTU}, in this step, we prove that for each $\lambda\in M$ the fundamental group of $I^+_\lambda$ is isomorphic to the group $\mathbb{Z}[1/d]=\left\{k/d^n :k\in \mathbb{Z}, n\in \mathbb{N}\cup\{0\}\right\}$.
Define the closed one-form $\omega_\lambda = \mathrm{d} \phi_\lambda/\phi_\lambda$ on $V_R^+$. Extend it to $I_\lambda^+$ using the functional equation \eqref{eqphi}. For $n\geq 0$, $H_\lambda^n: (H_\lambda^n)^{-1}(V_R^+) \to V_R^+$ is a homeomorphism that pullback closed forms to closed forms, so we define 
\[
\omega_\lambda = \frac{1}{d^n}(H_\lambda^n)^* \omega_{\sigma^n(\lambda)} \quad\text{on $(H_\lambda^n)^{-1}(V_R^+)$}.
\]
On $V_R^+$, 
\[
H_\lambda^* \omega_{\sigma(\lambda)} = H_\lambda^*\left(\frac{\mathrm{d} \phi_{\sigma(\lambda)}}{\phi_{\sigma(\lambda)}}\right)=\frac{\mathrm{d}(\phi_{\sigma(\lambda)}  \circ H_\lambda)}{\phi_{\sigma(\lambda)} \circ H_\lambda} = \frac{\mathrm{d} \phi_\lambda ^d}{\phi_\lambda ^d}=d\frac{\mathrm{d} \phi_\lambda}{\phi_\lambda}=d\omega_\lambda.
\]
Note that $H_\lambda^*\omega_{\sigma(\lambda)}=d \omega_\lambda$ on $I_\lambda^+$.
First, we see that the fundamental group of $V_R^+$ is $\mathbb{Z}$. Let $C$ be a closed path in $V_R^+$, then
\[ \int_C \omega_\lambda = \int_C \frac{d \phi_\lambda}{\phi_\lambda} =\int_C \frac{d(ye^{\gamma_\lambda(x,y)})}{ye^{\gamma_\lambda(x,y)}}=\int_C \frac{dy}{y}+\int_Cd \gamma =\int_C \frac{dy}{y}, \]
which implies
\[ 
\frac{1}{2\pi i}\int_C \omega_\lambda =\frac{1}{2 \pi i}\int_C \frac{dy}{y} \in \mathbb{Z}.
\]
The path $C$ is null-homotopic ($C \sim 0$) if and only if $\int_C \omega_\lambda =0$ and for every $m \in \mathbb{Z}$, $\frac{1}{2\pi i} \int_C \omega_\lambda =m$ if and only if $C \sim \sigma_m$, where for $\theta\in [0,1]$, $\sigma_m:\theta \mapsto (0,2Re^{2 \pi i m \theta})$ is a closed curve in $V_R^+$. Thus, $\pi_1(V_R^+) \cong \mathbb{Z}$.
Now, let $C$ be a curve in $I_\lambda^+$. Since $C$ is compact, there exist $n >0$, such that $ H_\lambda^n(C) \subset V_R^+$, and 
\[
\int_C \omega_\lambda = \frac{1}{d^n}\int_C (H_\lambda^n)^* \omega_{\sigma^n(\lambda)} =\frac{1}{d^n} \int_{H_\lambda^n(C)} \omega_{\sigma^n(\lambda)} \in \mathbb{Z}\left[\frac{1}{d}\right].
\]
Further, 
\[
\int_C \omega_\lambda=0 \Leftrightarrow \int_{H_\lambda^n(C)} \omega_{\sigma^n(\lambda)} =0 \Leftrightarrow H_\lambda^n(C) \sim 0 \Leftrightarrow C \sim 0,
\]
where the last implication is because $(H_\lambda^n)^{-1}: I_{\sigma^n(\lambda)}^+ \to I_\lambda^+$ is a homeomorphism. If we define 
\[
f_\lambda:\pi _1 (I_\lambda^+) \to \mathbb{Z}\left[\frac{1}{d}\right] \text{ by } f_\lambda([C])=\frac{1}{2 \pi i} \int_C \omega_\lambda,
\]
then $f_\lambda$ is an injective group homomorphism. Furthermore for $n_1/d^{n_2}\in \mathbb{Z}\left[1/d\right]$, $f_\lambda\left((H_\lambda^{n_2})^{-1}(\sigma_{n_1})\right)=n_1/d^{n_2}$. Thus, $f_\lambda$ is a group isomorphism. If $C \subset I_\lambda^+$ is a closed curve, then 
\[
f_{\sigma(\lambda)}([H_\lambda(C)])= \frac{1}{2 \pi i} \int_{H_\lambda(C)} \omega_{\sigma(\lambda)} = \frac{1}{2\pi i}\int_C H_\lambda^* \omega_{\sigma(\lambda)}=\frac{1}{2\pi i}\int_C d \omega_\lambda=d f_\lambda([C]).
\]

\medskip
\no
{\it Step 3.}
Now we construct a covering manifold $\hat I_\lambda^+$ of $I_\lambda^+$ with fundamental group $\mathbb{Z}\leq \mathbb{Z}[1/d]$. Fix a point $\alpha \in V_R^+$. Define an equivalence relation $\sim _{\lambda}$ on the set $A_\lambda=\{(z,C): z \in I_\lambda^+$ and C is a path from $\alpha$ to $z$ in $I_\lambda^+ \}$ by

\begin{center}
    $(z,C) \sim_{\lambda} (z',C') \Leftrightarrow z=z'$ and $f_\lambda([C \bar{C}']) \in \mathbb{Z}$.
\end{center}
Set, $\hat I_\lambda^+ =A_\lambda/ \sim_{\lambda}$, the set of equivalence classes of $\sim_{\lambda}$. Define $\pi_\lambda:\hat I_\lambda^+ \to I_\lambda^+$ by
\[
[z,C] \mapsto z.
\]
Give $\hat I_\lambda^+$ a topology such that $\pi_\lambda$ is a continuous map. In fact, by the standard construction of covering spaces $\pi_\lambda$ is a covering, and $\pi_1(\hat{I}_\lambda^+) \cong \mathbb{Z}$. In addition, we can give $\hat{I}_\lambda^+$ a unique complex manifold structure which makes $\pi_\lambda$ a holomorphic map.
Let $\hat V_R^+ = \{[(z,C)] \in \hat{I}_\lambda^+:$ C is a path in $ V_R^+ \}$, then $\pi_\lambda|_{\hat V_R^+} : \hat{V}_R^+ \to V_R^+$ is a biholomorphism. Define $\hat{\phi}_\lambda: \hat{I}_\lambda^+\to \mathbb{C}$ by \[
\hat{\phi}_\lambda([z,C])=\phi_\lambda(\alpha) e^{\int_C \omega_\lambda}.
\]

\medskip
\no
We now establish some properties through some propositions.
\begin{prop} If $z=z'$ and $\hat{\phi}_\lambda([z,C])=\hat{\phi}_\lambda([z',C'])$ then $[z,C]=[z',C']$.
\end{prop}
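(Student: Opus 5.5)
The plan is to compare the two defining formulas for $\hat{\phi}_\lambda$ directly and then translate the resulting condition on periods into the equivalence relation $\sim_\lambda$ using the homomorphism $f_\lambda$ from Step 2.

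Suppose $z=z'$ and $\hat{\phi}_\lambda([z,C])=\hat{\phi}_\lambda([z,C'])$. Since $\phi_\lambda(\alpha)=\alpha_2 e^{\gamma_\lambda(\alpha)}\neq 0$ (recall $\alpha\in V_R^+$, so $|proj_2(\alpha)|>R>0$), we may cancel it. This gives
\[
e^{\int_C \omega_\lambda}=e^{\int_{C'}\omega_\lambda},
\]
and hence
\[
\int_C\omega_\lambda-\int_{C'}\omega_\lambda\in 2\pi i\,\mathbb{Z}.
\]

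Next, consider the closed loop $C\bar{C}'$ based at $\alpha$: traverse $C$ from $\alpha$ to $z$ and then return along $C'$ reversed. Since $\omega_\lambda$ is a closed one-form on $I_\lambda^+$, integration is additive under concatenation and changes sign under reversal. Therefore
\[
\int_{C\bar{C}'}\omega_\lambda=\int_C\omega_\lambda-\int_{C'}\omega_\lambda.
\]
By the definition of $f_\lambda$ in Step 2, this says
\[
f_\lambda([C\bar{C}'])=\frac{1}{2\pi i}\int_{C\bar{C}'}\omega_\lambda\in\mathbb{Z}.
\]
Together with $z=z'$, this is exactly the condition $(z,C)\sim_\lambda(z',C')$. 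Hence $[z,C]=[z',C']$.

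The only point needing care is that $\hat{\phi}_\lambda$ is well defined, i.e. independent of the representative. Changing $C$ within its class alters $\int_C\omega_\lambda$ only by $2\pi i$ times an integer. This holds because $\int_C\omega_\lambda$ depends only on the homotopy class of $C$ rel endpoints, $\omega_\lambda$ being closed, and because equivalent representatives differ by a loop whose $f_\lambda$-value is an integer. So the exponential is unchanged. I expect no real obstacle: the argument is the converse direction of this same computation, and the nonvanishing of $\phi_\lambda(\alpha)$ is immediate from the bound $m_1\le|\phi_\lambda/y|$ in Step 1.
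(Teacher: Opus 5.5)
Your proof is correct and is essentially the paper's argument: cancel $\phi_\lambda(\alpha)$, conclude $\int_C\omega_\lambda-\int_{C'}\omega_\lambda\in 2\pi i\,\mathbb{Z}$, i.e.\ $f_\lambda([C\bar{C}'])\in\mathbb{Z}$, and combine this with $z=z'$. You also spell out two points the paper leaves implicit: that $\phi_\lambda(\alpha)\neq 0$, and that the integral is additive over the concatenated loop $C\bar{C}'$.
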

\begin{proof} Since $\hat{\phi}_\lambda([z,C])=\hat{\phi}_\lambda([z',C'])$, 
\[
\phi_\lambda(\alpha)e^{\int_C \omega_\lambda}=\phi_\lambda (\alpha)e^{\int_{C'} \omega_\lambda},
\]
which implies $f_\lambda([C\bar{C}']) \in \mathbb{Z}$. Further since $z=z'$, $[z,C]=[z',C'].$
\end{proof}

\begin{prop} If $\hat{z}=[z,C] \in \hat{V}_R^+$, then $\hat{\phi}_\lambda(\hat{z})=\phi_\lambda(\pi_\lambda(\hat{z}))$.
\end{prop}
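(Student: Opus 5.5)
The plan is to reduce the identity to the fact that $\omega_\lambda = \mathrm{d}\phi_\lambda/\phi_\lambda$ has $\phi_\lambda$ as a multiplicative primitive on $V_R^+$. Let $\hat z=[z,C]\in\hat V_R^+$. By the definition of $\hat V_R^+$, we may choose the representative $(z,C)$ so that $C$ is a path from $\alpha$ to $z$ lying entirely in $V_R^+$. Note that $\alpha\in V_R^+$ was fixed in Step 3, and that $\hat\phi_\lambda$ is well defined on equivalence classes. The latter holds because if $(z,C)\sim_\lambda(z,C')$, then $\int_C\omega_\lambda-\int_{C'}\omega_\lambda = 2\pi i\, f_\lambda([C\bar C'])\in 2\pi i\mathbb{Z}$, so the exponentials agree. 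It therefore suffices to compute $\hat\phi_\lambda$ using this particular representative.

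On $V_R^+$ the form $\omega_\lambda$ is, by construction, exactly $\mathrm{d}\phi_\lambda/\phi_\lambda$, where $\phi_\lambda = y e^{\gamma_\lambda}$ is holomorphic and nonvanishing on $V_R^+$ (indeed $|\phi_\lambda|\ge m_1|y|>0$). Parametrize $C:[0,1]\to V_R^+$ and set $g(t)=\phi_\lambda(C(t))$, a nonvanishing continuous (piecewise smooth) function. Then
\[
\int_C\omega_\lambda=\int_0^1\frac{g'(t)}{g(t)}\,\mathrm{d}t,
\]
and the function $h(t)=g(t)\exp\bigl(-\int_0^t g'/g\bigr)$ satisfies $h'\equiv0$. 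Hence $h(1)=h(0)$, which gives
\[
e^{\int_C\omega_\lambda}=\frac{\phi_\lambda(z)}{\phi_\lambda(\alpha)}.
\]
One can equivalently phrase this through a continuous branch of $\log\phi_\lambda$ along $C$. If $C$ is merely continuous, we first replace it by a homotopic piecewise smooth path in $V_R^+$ with the same endpoints; this does not change the integral, since $\omega_\lambda$ is closed.

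Substituting into the definition of $\hat\phi_\lambda$, we get $\hat\phi_\lambda(\hat z)=\phi_\lambda(\alpha)\cdot\phi_\lambda(z)/\phi_\lambda(\alpha)=\phi_\lambda(z)=\phi_\lambda(\pi_\lambda(\hat z))$.

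The only point needing care is the first step: justifying that a representative path may be taken inside $V_R^+$, and that the extended form $\omega_\lambda$ on $I_\lambda^+$ restricts on $V_R^+$ to the original $\mathrm{d}\phi_\lambda/\phi_\lambda$. The second fact follows from the consistency check $H_\lambda^*\omega_{\sigma(\lambda)}=d\,\omega_\lambda$ on $V_R^+$ (the $n=0$ case of the extension). I do not expect either to be a serious obstacle.
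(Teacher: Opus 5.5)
Your proposal is correct and follows the paper's route: take a representative with $C\subset V_R^+$, use $\omega_\lambda=\mathrm{d}\phi_\lambda/\phi_\lambda$ there, and conclude $e^{\int_C\omega_\lambda}=\phi_\lambda(z)/\phi_\lambda(\alpha)$. The extra points you supply, namely well-definedness of $\hat\phi_\lambda$ on equivalence classes and the justification of the exponential identity along a path, are left implicit in the paper's one-line computation.
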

\begin{proof} We have 
\[
\hat{\phi}_\lambda(\hat{z}) = \phi_\lambda(\alpha)e^{\int_C \omega_\lambda} =\phi_\lambda(\alpha)e^{\int_C \frac{d\phi_\lambda}{\phi_\lambda}}= \phi_\lambda(z).\]
\end{proof}

\noindent
Fix a path $L$, from $\alpha$ to $H_\lambda(\alpha)$ in $V_R^+$. Define $\hat H_\lambda:\hat I_\lambda^+ \to \hat I_\lambda^+$ by 
\[
\hat H_\lambda\left([z,C]\right)=[H_\lambda(z),LH_\lambda(C)].
\]
Since $\pi_\lambda \circ \hat H_\lambda =H_\lambda \circ \pi_\lambda$, $\hat H_\lambda$ is a holomorphic map which is lift of $H$. Let $[z,C],[z',C']\in\hat{I}_\lambda^+$ with $\hat{H}_\lambda\left([z,C]\right)=\hat{H}_\lambda\left([z',C']\right)$. Then $z=z'$ and 
\[
f_{\sigma(\lambda)}\left(H_\lambda(C)\overline{H_\lambda(C')}\right)\in \mathbb{Z}\implies df_\lambda\left(C\bar C'\right)\in \mathbb{Z}.
\]
Thus, $\hat{H}_\lambda$ is a $d$-to-$one$ map.

\begin{prop} $\hat H_\lambda$ satisfies the functional equation $\hat\phi_{\sigma(\lambda)} \circ \hat H_\lambda(\hat{z})=\phi_\lambda(\hat{z})^d$.
\end{prop}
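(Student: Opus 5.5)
The identity to prove is $\hat\phi_{\sigma(\lambda)}(\hat H_\lambda(\hat z))=\hat\phi_\lambda(\hat z)^d$; the right-hand side in the statement should read $\hat\phi_\lambda(\hat z)^d$. The plan is to compute both sides straight from the definition of $\hat\phi$ as an exponentiated line integral, and then use the pullback relation $H_\lambda^*\omega_{\sigma(\lambda)}=d\,\omega_\lambda$ on $I_\lambda^+$ from Step 2.

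Write $\hat z=[z,C]$, with $C$ a path in $I_\lambda^+$ from $\alpha$ to $z$. Then $\hat H_\lambda(\hat z)=[H_\lambda(z),L\,H_\lambda(C)]$, where $L\,H_\lambda(C)$ is the concatenation running from $\alpha$ to $H_\lambda(\alpha)$ and then to $H_\lambda(z)$. By definition,
\[
\hat\phi_{\sigma(\lambda)}(\hat H_\lambda(\hat z))=\phi_{\sigma(\lambda)}(\alpha)\,e^{\int_L\omega_{\sigma(\lambda)}}\,e^{\int_{H_\lambda(C)}\omega_{\sigma(\lambda)}} .
\]

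I treat the two factors separately.

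\textbf{The factor along $L$.} Since $L$ lies in $V_R^+$, where $\omega_{\sigma(\lambda)}=\mathrm{d}\phi_{\sigma(\lambda)}/\phi_{\sigma(\lambda)}$ with $\phi_{\sigma(\lambda)}$ single valued and nonvanishing, integration gives
\[
\phi_{\sigma(\lambda)}(\alpha)\,e^{\int_L\omega_{\sigma(\lambda)}}=\phi_{\sigma(\lambda)}(H_\lambda(\alpha)) .
\]
The functional equation $\phi_{\sigma(\lambda)}\circ H_\lambda=\phi_\lambda^d$ on $V_R^+$ then turns this into $\phi_\lambda(\alpha)^d$.

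\textbf{The factor along $H_\lambda(C)$.} By change of variables and the relation $H_\lambda^*\omega_{\sigma(\lambda)}=d\,\omega_\lambda$ on $I_\lambda^+$,
\[
\int_{H_\lambda(C)}\omega_{\sigma(\lambda)}=\int_C H_\lambda^*\omega_{\sigma(\lambda)}=d\int_C\omega_\lambda .
\]

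\textbf{Combining.} Multiplying the two factors gives
\[
\hat\phi_{\sigma(\lambda)}(\hat H_\lambda(\hat z))=\phi_\lambda(\alpha)^d\,e^{d\int_C\omega_\lambda}=\bigl(\phi_\lambda(\alpha)\,e^{\int_C\omega_\lambda}\bigr)^d=\hat\phi_\lambda(\hat z)^d .
\]

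\textbf{Well-definedness.} I also need to check that neither side depends on the choice of representative.
\begin{itemize}
\item Replacing $C$ by $C'$ with $f_\lambda([C\bar C'])\in\mathbb{Z}$ changes $\int_C\omega_\lambda$ by an element of $2\pi i\mathbb{Z}$, so $\hat\phi_\lambda(\hat z)$ is unchanged. On the left, the integral along $H_\lambda(C)$ changes by $d$ times that amount, which is still in $2\pi i\mathbb{Z}$.
\item Independence from the choice of $L$ is not needed, because $\hat H_\lambda$ was defined using the fixed path $L$.
\end{itemize}

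\textbf{Main obstacle.} The only delicate point is the first factor: it uses that $\phi_{\sigma(\lambda)}$ is single valued on $V_R^+$, so that integrating its logarithmic derivative along $L$ recovers exactly the ratio of its values at the endpoints. It also uses the functional equation for $\phi_\lambda$ at the base point $\alpha$, which requires $\alpha\in V_R^+$; that holds because $\alpha$ was chosen there. Everything else is direct substitution.
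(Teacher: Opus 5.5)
Your proposal is correct and is essentially the paper's own computation. You split $\int_{L H_\lambda(C)}\omega_{\sigma(\lambda)}$ into the part along $L$, evaluated via $\omega_{\sigma(\lambda)}=\mathrm{d}\phi_{\sigma(\lambda)}/\phi_{\sigma(\lambda)}$ on $V_R^+$ and the functional equation at $\alpha$, and the part along $H_\lambda(C)$, evaluated via $H_\lambda^*\omega_{\sigma(\lambda)}=d\,\omega_\lambda$; you are also right that the right-hand side should read $\hat\phi_\lambda(\hat z)^d$, which is what the paper's chain of equalities actually produces.
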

\begin{proof} Let $\hat z =[z,C]\in \hat{I_\lambda^+}$, then
\begin{align*}
    \hat\phi_{\sigma(\lambda)} \circ \hat H_\lambda(\hat{z}) & =\hat\phi_{\sigma(\lambda)} \circ \hat H_\lambda([z,C])\\
    &=\hat\phi_{\sigma(\lambda)}([H_\lambda(z),LH_\lambda(C)])\\
    & =\phi_{\sigma(\lambda)}(\alpha)e^{\int_{LH_\lambda(C)}\omega_{\sigma(\lambda)}}\\
    &=\phi_{\sigma(\lambda)}(H_\lambda(\alpha))e^{\int_C H_\lambda^* \omega_{\sigma(\lambda)}}\\
    &=(\phi_\lambda(\alpha))^d e^{d\int_C \omega_\lambda}\\
    &=\hat{\phi_\lambda}([z,C])^d.
\end{align*}
\end{proof}

\medskip
\noindent
Let $[z,C]\in \hat{V}_R^+$. Since $H(V_R^+)\subset V_R^+$, $\hat{H}\left([z,C]\right)=[H(z),LH(C)]\in \hat{V}_R^+$. Thus we have increasing sequence of open sets $\hat V_R^+ \subset \hat H_\lambda^{-1}(\hat V_R^+) \subset \cdots (\hat H_\lambda^n)^{-1}(\hat V_R^+)\subset\cdots .$
Further, for each $\lambda\in M$,  $I_\lambda^+=\cup_{n\geq 0}(H_\lambda^n)^{-1}(V_R^+)$, thus it follows that
\[
\hat I_\lambda^+=\bigcup_{n=0}^{\infty}\left(\hat H_\lambda^n\right)^{-1}\left(\hat V_R^+\right).
\]

\medskip
\noindent
{\it Step 4.}
Let $U_\lambda^+ =\left\{z=(x,y) \in V_R^+ : \vert \phi_\lambda(z) \vert >M \max\{R, \vert x \vert \} \right\}$. Then using \eqref{eqphi}, we get $H_\lambda(U_\lambda^+) \subset U_{\sigma(\lambda)}^+$. Thus, $U_\lambda^+ \subset H_\lambda^{-1}(U_{\sigma(\lambda)}^+) \subset (H_\lambda^2)^{-1}(U_{\sigma^2(\lambda)}^+) \subset \cdots$.
Note that for some $m\in \mathbb{N}$, $H_\lambda^m(V_R^+)\subset U_{\sigma^m(\lambda)}^+$. Thus, 
\[
I_\lambda^+ = \bigcup_{n \geq 0} (H_\lambda^n)^{-1}(U_{\sigma^n(\lambda)}^+).
\]
Let $\hat U_\lambda^+ = \left\{\hat z \in V_R^+ : \pi_\lambda(\hat z) \in U_\lambda^+ \right\}$. Then
\begin{equation}
\hat U_\lambda^+ \subset \hat H_\lambda^{-1}\left(\hat U_{\sigma(\lambda)}^+\right) \subset \left(\hat H_\lambda^2\right)^{-1}\left(\hat U_{\sigma^2(\lambda)}^+\right) \subset \cdots \text{ and }
    \hat I_\lambda^+ = \bigcup_{n \geq 0} \left(\hat H_\lambda^n\right)^{-1}\left(\hat U_{\sigma^n(\lambda)}^+\right).
\end{equation}

\medskip
\no
Since $\phi_\lambda(x,y)\approx y$ as $\|(x,y)\|\to \infty$ in $V_R^+$, there exist sufficiently large $R>0$ such that the map $(x,y) \mapsto (x,\phi_\lambda(x,y))=(x,t_\lambda)$ is a biholomorphism from $U_\lambda^+$ onto $\{(x,t) \in \mathbb{C}^2 : \vert t \vert >M \max\{R, \vert x \vert \} \}$. Let $(x,t) \mapsto (x,\theta_\lambda(x,t))$ be its inverse. Then
\begin{equation}
    \frac{\partial(x,t_\lambda)}{\partial(x,y)}=\frac{1}{\partial\theta_\lambda/\partial t_\lambda}.
\end{equation}
\noindent
Let $(x,t_\lambda) \mapsto \left(t_\lambda \int_0^x \frac{\partial\theta_\lambda}{\partial t_\lambda}(\zeta,t_\lambda)d\zeta,t_\lambda\right)=(\tilde{s_\lambda},t_\lambda)$, then
\begin{equation}
    \frac{\partial(\tilde{s_\lambda},t_\lambda)}{\partial(x,t_\lambda)}=t_\lambda\frac{\partial\theta_\lambda}{\partial t_\lambda}.
\end{equation}
\noindent
Thus, the Jacobian determinant of the composition map $(x,y)\mapsto(x,t_\lambda)\mapsto (\tilde{s_\lambda},t_\lambda)$ is $t_\lambda$.
Consider $H_\lambda$ in the coordinates $(\tilde{s},t)$, we get
\begin{equation}\label{newH}
    (\tilde{s}_\lambda,t_\lambda)=(\tilde{s},t)\mapsto(x,y)\mapsto H_\lambda(x,y)=(x_1,y_1)\mapsto(\tilde{s}_{\sigma(\lambda),1},t_{\sigma(\lambda),1})=(\tilde{s_1},t_1),
\end{equation}
\noindent
here $t_\lambda=\phi_\lambda(x,y)$ and $t_{\sigma(\lambda),1}=\phi_{\sigma(\lambda)}(H_\lambda(x,y))=\phi_\lambda(x,y)^d = t_\lambda^d$. Hence $t_1=t^d$.
\noindent
Examine the image of $(0,t)$ under the map in \eqref{newH}, we get
\begin{equation}\label{0,t}
    (0,t)\mapsto(0,\theta_\lambda(0,t))\mapsto H_\lambda(0,\theta_\lambda(0,t))\mapsto \left(t^d \int_0^{\theta_\lambda(0,t)}\frac{\partial\theta_{\sigma(\lambda)}}{\partial t}(\zeta,t^d)d\zeta,t^d\right).
\end{equation}
\noindent
Comparing the Jacobian determinant of the map in \eqref{newH} and product of Jacobian determinants of individual maps that are composed to get the map \eqref{newH}, gives
\begin{equation}\label{ode}
    \frac{\partial \tilde{s}_1}{\partial \tilde{s}}=\frac{\delta_\lambda}{d} \Rightarrow \tilde{s}_1(\tilde{s},t)=\frac{\delta_\lambda}{d}\tilde{s} +C_\lambda(t),
\end{equation}
\noindent
where $C_\lambda(t)$ is a holomorphic function on $\vert t \vert>MR$.
Now, \eqref{0,t} and \eqref{ode} give
\[
C_\lambda(t)=t^d \int_0^{\theta_\lambda(0,t)}\frac{\partial\theta_{\sigma(\lambda)}}{\partial t_1}(\zeta,t^d)d\zeta.
\]
Since $\theta_\lambda(0,t)\approx t$ as $t\to \infty$, the Laurent expansion of $C_\lambda$ is  
\[
C_\lambda(t)=t^{d+1}+l_{\lambda,d}t^d+\cdots+l_{\lambda,0}+\frac{l_{\lambda,-1}}{t}+\cdots.
\]
Let $Q_\lambda$ and $C_\lambda^-$ denote the polynomial and singular parts, respectively, of $C_\lambda$, that is, 
\begin{center}
   $Q_\lambda(t)=t^{d+1}+l_{\lambda,d}t^d+\cdots+l_{\lambda,0}$ and  $C_\lambda^-(t)=\frac{l_{\lambda,-1}}{t}+\frac{l_{\lambda,-2}}{t^2}+\cdots$.
\end{center}
\noindent
Thus, $C_\lambda(t)=Q_\lambda(t)+C_\lambda^-(t)$. Define 
\[
R_\lambda(t)=\frac{d}{\delta_\lambda}C_\lambda^-(t)+\left(\frac{d^2}{\delta_\lambda \delta_{\sigma(\lambda)}}\right)C_{\sigma(\lambda)}^-(t^d)+\left(\frac{d^3}{\delta_\lambda \delta_{\sigma(\lambda)} \delta_{\sigma^2(\lambda)}}\right)C_{\sigma^2(\lambda)}^-(t^{d^2})+\cdots.
\]
Let $s=\tilde{s}+R_\lambda(t)$, then the map \eqref{newH} in the new coordinate (s,t) will become
\begin{align*}
(s,t) & \mapsto \left(s-R_{\lambda}(t),t\right) \mapsto \left(\frac{\delta_\lambda}{d}\left(s-R_\lambda(t)\right)+C_\lambda(t),t^d\right)\\
& \mapsto \left(\frac{\delta_\lambda}{d}s+C_\lambda(t)-\frac{\delta_\lambda}{d}R_\lambda(t)+R_{\sigma(\lambda)}(t^d),t^d\right)=\left(\frac{\delta_\lambda}{d}s+Q_\lambda(t),t^d\right),
\end{align*}
here $Q_\lambda(t)=C_\lambda(t)-\frac{\delta_\lambda}{d}R_\lambda(t)+R_{\sigma(\lambda)}(t^d)$ is the polynomial part of $C_\lambda(t).$ 
Let $\psi_\lambda=s$ on $U_\lambda^+$, then 
\begin{equation}\label{psi}
    \psi_{\sigma(\lambda)}(H_\lambda(z))=\frac{\delta_\lambda}{d}\psi_\lambda(z)+Q_\lambda(\phi_\lambda(z)).
\end{equation} 
Inductively, for $n\geq 1$, 
\begin{equation}\label{psin}
     \psi_{\sigma^n(\lambda)}(H_\lambda^n(z))=\frac{\delta_{\sigma^{n-1}(\lambda)}...\delta_\lambda}{d^n}\psi_\lambda(z)+\frac{\delta_{\sigma^{n-1}(\lambda)}...\delta_{\sigma(\lambda)}}{d^{n-1}}Q_\lambda(\phi_\lambda(z))+...+Q_{\sigma^{n-1}(\lambda)}(\phi_\lambda(z)^{d^{n-1}}).
\end{equation}
\noindent
Note that if we choose $R>0$ large enough then the map $(x,y)\mapsto(\tilde{s}_\lambda,t_\lambda)$ is a biholomorphism from $U_\lambda^+$ onto its image. Thus, the map $\Phi_\lambda = (\psi_\lambda,\phi_\lambda):U_\lambda^+ \to \mathbb{C}^2$ is an injective holomorphic map.

\medskip
\noindent
{\it Step 5.}
In this step, we prove that $\hat{I}_\lambda^+$ is biholomorphic to $\mathbb{C}\times\mathbb{C}\backslash\mathbb{\overline{D}}$. First we define a holomorphic function $\hat \psi_\lambda$ on $\hat I_\lambda^+$.
For $[z,C]\in \hat U_\lambda^+$, define 
\[
\hat \psi_\lambda([z,C])=\psi_\lambda\circ \pi_\lambda([z,C])= \psi_\lambda(z),
\]
and for $[z,C] \in \left(\hat H_\lambda^n\right)^{-1}\left(\hat{U}_{\sigma^n(\lambda)}^+\right)$, define 
\begin{align*}
\hat \psi_\lambda([z,C])=\frac{d^n}{\delta_{\sigma^{n-1}(\lambda)}\cdots\delta_\lambda}\hat{\psi}_{\sigma^n(\lambda)}\left(\hat H_\lambda^n([z,C])\right) - & \frac{d^n}{\delta_{\sigma^{n-1}(\lambda)}\cdots\delta_\lambda}Q_{\sigma^{n-1}(\lambda)}\left(\hat{\phi_\lambda}([z,C])^{d^{n-1}}\right)-\\
& \cdots-\frac{d}{\delta_\lambda}Q_\lambda\left(\hat{\phi_\lambda}([z,C])\right).
\end{align*}
It follows from \eqref{psin} that $\hat{\psi}_\lambda$ is well-defined and  satisfies the functional equation 
\[
\hat{\psi}_{\sigma(\lambda)}\left(\hat{H_\lambda}([z,C])\right)=\frac{\delta_\lambda}{d}\hat{\psi_\lambda}([z,C])+Q_\lambda\left(\hat{\phi_\lambda}([z,C])\right).
\]
Let $\hat{\Phi}_\lambda=(\hat \psi_\lambda,\hat{\phi_\lambda}):\hat{I}_\lambda^+\to \mathbb{C} \times\mathbb{C}\backslash\mathbb{\overline{D}}$.
And let $G:M\times\mathbb{C}\times\mathbb{C}\backslash\mathbb{\overline{D}} \to M\times\mathbb{C}\times\mathbb{C}\backslash\mathbb{\overline{D}}$ defined by
\[
G(\lambda,s,t)=\left(\sigma(\lambda),G_\lambda(s,t)\right), \text{ where } G_\lambda(s,t)=\left(\frac{\delta_\lambda}{d}s+Q_\lambda(t),t^d\right)
\]
is a holomorphic map from $\mathbb{C}\times \mathbb{C}\backslash\mathbb{\overline{D}}$ onto $\mathbb{C}\times \mathbb{C}\backslash\mathbb{\overline{D}}$. Then
\begin{align*}
G_\lambda^n(s,t) & =G_{\sigma^{n-1}(\lambda)}\circ\cdots\circ G_\lambda(s,t)\\
& =\left(\frac{\delta_{\sigma^{n-1}(\lambda)}\cdots\delta_\lambda}{d^n}s+\frac{\delta_{\sigma^{n-1}(\lambda)}\cdots\delta_{\sigma(\lambda)}}{d^{n-1}}Q_\lambda(t)+\cdots+Q_{\sigma^{n-1}(\lambda)}(t^{d^{n-1}}),t^{d^n}\right).
\end{align*}
Observe that $\hat \Phi_{\sigma(\lambda)}\circ \hat H_\lambda = G_\lambda \circ \hat \Phi_\lambda$. Inductively, for every $n \in \mathbb{N}$, $\hat \Phi_{\sigma^n(\lambda)}\circ \hat H_\lambda^n = G_\lambda^n \circ \hat \Phi_\lambda$, that is, the following diagram commutes.
\[
\begin{tikzcd}
\hat{I}_\lambda^+
  \arrow[d, "\hat{\Phi}_\lambda"']
  \arrow[r, "\hat{H}_\lambda^n"]
&
\hat{I}^+_{\sigma^n(\lambda)}
  \arrow[d, "\hat{\Phi}_{\sigma^n(\lambda)}"]
\\
\mathbb{C}\times \mathbb{C}\backslash\mathbb{\overline{D}}
  \arrow[r, "G_{\lambda}^n"']
&
\mathbb{C}\times\mathbb{C}\backslash\mathbb{\overline{D}}
\end{tikzcd}
\]
Let $W_\lambda=\hat \Phi_\lambda(\hat U_\lambda^+)$. 
Then $G_\lambda(W_\lambda)\subset W_{\sigma(\lambda)}$.
Thus, $W_\lambda \subset G_\lambda^{-1}(W_{\sigma(\lambda)}) \subset (G_\lambda^2)^{-1}(W_{\sigma^2(\lambda)}) \subset \cdots$.
Further, we claim that
\begin{equation}\label{Wlambda}
\mathbb{C}\times \mathbb{C}\backslash\mathbb{\overline{D}} = \bigcup_{n\geq 0}(G_\lambda^n)^{-1}(W_{\sigma^n(\lambda)}).
\end{equation}
Before proving the claim, we prove that $\{(s,t)\in \mathbb{C}^2:\vert t \vert >\tilde{M},\vert s  \vert < \rho \vert t \vert ^2\}\subset\Phi_\lambda(U_\lambda^+)$, where $\tilde{M}$ (large enough), and $\rho$ (small enough) are constants. The proof of this result follows the approach of \cite[Lemma 2.1]{MP}. Let $\epsilon_0>0$. Since $\phi_\lambda(x,y)\approx y$ as $\|(x,y)\|\to \infty$ in $V_R^+$, $\theta_\lambda(x,t)\approx t$ as $t\to \infty$ in $\{(x,t)\in \mathbb{C}^2: |t|>M\max\{R,|x|\}\}$. Thus, for large $R$ and for $(x,t)\in\{(x,t)\in \mathbb{C}^2:\max\{M|x|,RM\}<|t|\}$,
 \[
  (1-\epsilon_0)<\left|\frac{\partial\theta_\lambda}{\partial t}(x,t)\right|<(1+\epsilon_0)
 \]
Fix $|t|>MR$. Consider the map $\kappa_t: \{x\in \mathbb{C}: |x|<|t|/M\}\to \mathbb{C}$ defined by
\[
 x\mapsto \int_0^x \frac{\partial\theta_\lambda}{\partial t}(\xi,t)\mathrm{d}\xi.
\]
It is a holomorphic map and we choose $M$ large such that it extends continuously to the boundary. For $|x|=|t|/M$, 
\[
|\kappa_t(x)|>(1-\epsilon_0)|t|/M.
\]
Let $x_0\in \mathbb{C}$ be such that $|x_0|<(1-\epsilon_0)|t|/M$. Consider the holomorphic function $\kappa_{t,x_0}:\{x:\mathbb{C}:|x|<|t|/M\}\to\mathbb{C}$ defined by
\[
x\mapsto \kappa_t(x)-x_0.
\]
Then 
\[
|\kappa_{t,x_0}(x)-\kappa_t(x)|=|x_0|<|\kappa_t(x)|
\]
for $|x|=|t|/M$. Thus, by Rouche's theorem, $\kappa_{t,x_0}$ and $\kappa_t$ have same number of zeros in $\{x\in \mathbb{C}^2 : |x|<|t|/M\}$. Since $\kappa_t(0)=0$, there exists $x_1\in \{|x|<|t|/M\}$ such that $\kappa_{t,x_0}(x_1)=0$ i.e, $\kappa_t(x_1)=x_0$. Thus, for $(x,y)\in U_\lambda^+$, the map
\[
(x,y)\mapsto \left(t \int_0^x \frac{\partial\theta_\lambda}{\partial t}(\xi,t)\mathrm{d}\xi,t\right) \text{ ; } t=\phi_\lambda(x,y)
\]
contains the set $\{(s,t)\in \mathbb{C}^2 : |s|<(1-\epsilon_0)|t|^2/M, |t|>MR\}$ in the range. Now choose $\tilde{M}>MR$ large enough such that for $|t|>\tilde{M}$, $|R_\lambda(t)|<(1-\epsilon_0)|t|^2/2M$. Let $|s|<\rho<(1-\epsilon_0)|t|^2/2M$, then
\[
|s-R_\lambda(t)|<(1-\epsilon_0)|t|^2/M.
\]
Thus, there is $(x,y)\in U_\lambda^+$ such that $\Phi_\lambda(x,y)=(s,t)$.  Therefore, \eqref{Wlambda} holds.
Now, we see that $\hat \Phi_\lambda=(\hat \psi_\lambda,\hat \phi_\lambda):\hat I_\lambda^+ \to \mathbb{C} \times \mathbb{C}\backslash\mathbb{\overline{D}}$ is a biholomorphism. First note that $\hat \Phi_\lambda$ maps $(\hat H_\lambda^n)^{-1}(\hat U_{\sigma^n(\lambda)}^+)$ into $(G_\lambda^n)^{-1}(W_{\sigma^n(\lambda)})$. If $\hat z \in (\hat H_\lambda^n)^{-1}(\hat U_{\sigma^n(\lambda)}^+)$, then $G_\lambda^n(\hat \Phi_\lambda(\hat z)) \in W_{\sigma^n(\lambda)}$ as $\hat \Phi_{\sigma^n(\lambda)}\circ \hat H_\lambda^n = G_\lambda^n \circ \hat \Phi_\lambda$, hence $\hat \Phi_\lambda(\hat z) \in (G_\lambda^n)^{-1}(W_{\sigma^n(\lambda)})$.
We now prove that 
\[
\hat \Phi_\lambda:(\hat H_\lambda^n)^{-1}(\hat U_{\sigma^n(\lambda)}^+) \to (G_\lambda^n)^{-1}(W_{\sigma^n(\lambda)})
\]
is a bijection. For injectivity, let $\hat \Phi_\lambda(\hat z_1)=\hat \Phi_\lambda(\hat z_2)$, for $\hat z_1, \hat z_2 \in (\hat H_\lambda^n)^{-1}(\hat U_{\sigma^n(\lambda)}^+)$. Since $\hat \Phi_{\sigma^n(\lambda)}\circ \hat H_\lambda^n = G_\lambda^n \circ \hat \Phi_\lambda$, so $\hat \Phi_{\sigma^n(\lambda)}(\hat H_\lambda^n(\hat z_1))=\hat \Phi_{\sigma^n(\lambda)}(\hat H_\lambda^n(\hat z_2))$. This gives $\hat H_\lambda^n(\hat z_1)=\hat H_\lambda^n(\hat z_2)$ as $\hat \Phi_{\sigma^n(\lambda)}$ is injective on $\hat U_{\sigma^n(\lambda)}^+$. Hence $z_1=z_2$ and $\hat \Phi_\lambda(\hat z_1)=\hat \Phi_\lambda(\hat z_2)$ gives $\hat z_1 =\hat z_2$. Let $\hat z \in \hat U_{\sigma^n(\lambda)}^+$, then the set $\{(\hat H_\lambda^n)^{-1}(\hat z)\}$ has $d^n$ points, and is mapped by $\hat \Phi_\lambda$ into $\{(G_\lambda^n)^{-1}(\hat \Phi_{\sigma^n(\lambda)}(\hat z))\}$ which again has $d^n$ points in it. As this map is injective, so is surjective.

\medskip
\noindent
Since for every $\lambda \in M$, $\hat I^+_\lambda$ is biholomorphic to $\mathbb{C}\times \mathbb{C}\backslash\mathbb{\overline{D}}$, so $I^+_\lambda$ is biholomorphic to quotient of $\mathbb{C}\times \mathbb{C}\backslash\mathbb{\overline{D}}$ by discrete group of covering transformations which is isomorphic to $\mathbb{Z}\left[1/d\right]/\mathbb{Z}$. Hence for any $\lambda_1,\lambda_2 \in M$, $I^+_{\lambda_1}$ and $I^+_{\lambda_2}$ are biholomorphic.

\medskip
\no
{\it Step 6.} In this step, for the special case $M=\overline{\mathbb{D}}$, we construct a covering space of the global escaping set.
Let \[\tilde{V}_R^+ = \{(\lambda,x,y) : \lambda \in \mathbb{D}, (x,y) \in V_R^+ \} = \mathbb{D} \times V^+_R.\]
Since for each $\lambda\in \mathbb{D}$, $V_R^+\subset I^+_\lambda$, $\tilde{V}_R^+ \subset I^+$. If $(\lambda,x,y) \in \tilde{V}_R^+$ then $H(\lambda,x,y)=(\sigma(\lambda),H_\lambda(x,y)) \in \tilde{V}_R^+$, hence we have the increasing sequence $\tilde{V}_R^+ \subset H^{-1}(\tilde{V}_R^+) \subset (H^2)^{-1}(\tilde{V}_R^+) \subset \cdots$. Since for every $\lambda \in M$, $I_{\lambda}^+ = \bigcup_{n=0}^\infty (H_\lambda^n)^{-1}(V_R^+)$, it follows that 
\[I^+ = \bigcup_{n \geq 0} (H^n)^{-1}(\tilde{V}_R^+).\]
Now, to define an analogue of the B\"ottcher function on $\mathbb{D} \times V_R^+$, choose a branch of logarithm $\beta$ such that on $\mathbb{D} \times V_R^+$ 
\[
e^{\beta(\lambda,x,y)} = \frac{q_\lambda(y)-\delta_\lambda x}{y^d} = e^{\beta_\lambda(x,y)}.
\]
Inductively, for $n\geq 1$, 
\[
proj_2 \circ H_\lambda^n(x,y)=y^{d^n} e^{\beta(H^{n-1}(\lambda,x,y))+...+d^{n-1}\beta(\lambda,x,y)}.
\]
 Taking the $d^n$-$th$ root, we get
 \[
 \phi_n(\lambda,x,y)=y e^{\frac{1}{d}\beta(\lambda,x,y)+...\frac{1}{d^n}\beta(H^{n-1}(\lambda,x,y))} = \phi_{\lambda,n}(x,y).
 \]
Let 
\[
\phi(\lambda,x,y)= \lim_{n \to \infty} \phi_n(\lambda,x,y) = ye^{\gamma(\lambda,x,y)} = y e^{\gamma_\lambda(x,y)},
\]
where as discussed in Step~1, the convergence is uniform on compact subsets of $\mathbb{D}\times V^+_R$. So, $\phi:\tilde{V}_R^+ \to \mathbb{C}\backslash\mathbb{\overline{D}}$ is holomorphic as $\beta$ is. Further, for $(\lambda,x,y) \in \tilde{V}_R^+$,
\begin{equation}\label{eq2.15}
\phi(H(\lambda,x,y))=\phi(\sigma(\lambda),H_\lambda(x,y))=\phi_{\sigma(\lambda)}(H_\lambda(x,y))=(\phi_\lambda(x,y))^d=(\phi(\lambda,x,y))^d.
\end{equation}
As $y \to \infty$ in $\tilde V_R^+$, $\phi \approx y$.
Define closed one form $\omega=d\phi/\phi$ on $\tilde{V}_R^+$ and extend it to $I^+$ using \eqref{eq2.15}, i.e., 
\[\omega=\frac{1}{d^n}(H^n)^*(\omega) \quad \text{on } (H^n)^{-1}(\tilde{V}_R^+).\]
Then $\omega$ is well defined on $I^+$ with $H^*\omega=d \omega$. Note that $\omega(\lambda,x,y)=\omega_\lambda(x,y)$.
The fundamental group of $\tilde{V}_R^+$ is $\pi_1(\tilde{V}_R^+) \cong \mathbb{Z}$ and $f:\pi_1(I^+) \to \mathbb{Z}[1/d]$ defined by $f([C])=\int_C \omega$ is a group isomorphism. 
Fix $\lambda_0 \in \mathbb{D}$ and $\alpha \in V^+_R$. Define a relation $\sim$ on the set $A=\{(\lambda,z,C): (\lambda,z) \in I^+, C$ is a path from $(\lambda_0,\alpha)$ to $(\lambda,z)$ in $I^+\}$ by 
\[
(\lambda,z,C) \sim (\lambda',z',C') \iff \lambda=\lambda',z=z',f([C \bar{C'}]) \in \mathbb{Z}.
\]
Let $\hat I^+ =A/\sim$ and let $\pi:\hat I^+ \to I^+$ defined by $[(\lambda,z,C)] \mapsto (\lambda,z)$. Give $\hat I^+$ topology which makes $\pi$ a continuous map. Further, since $\pi$ is a covering map we can pullback analytic structure of $I^+$ to give an analytic structure on $\hat I^+$ such that $\pi$ is holomorphic.

\medskip
\noindent
Let $C$ be a closed curve based at $(\lambda,z)$ in $I^+$. Then for some $n\geq 1$, $H^n(C) \subset \tilde{V}_R^+$, which implies $(proj_2 \times proj_3) \circ H^n(C) \subset V_R^+$. Let $\gamma_0=(H_\lambda^n)^{-1}((proj_2 \times proj_3) \circ H^n (C))$. Since $[H^n(C)]=[H^n(\lambda,\gamma_0)]$, $[C]=[(\lambda,\gamma_0)]$ in $I^+$.  
Thus, 
\[
\hat{I}^+= \{(\lambda,\hat{z}) : \lambda \in \mathbb{D}, \hat{z} \in \hat{I}^+_\lambda\}.
\]

\medskip
\noindent
{\it Step 7.} We now prove that $\hat{I}^+$ is biholomorphic to $\mathbb{D}\times\mathbb{C}\times\mathbb{C}\backslash\mathbb{\overline{D}}$.
Let 
\begin{align*}
    U^+ & =\{(\lambda,x,y) :\lambda\in \mathbb{D},  |\phi_\lambda(x,y)|>M \max\{R,|x|\}\}\\
    & =\{(\lambda,x,y):\lambda \in \mathbb{D}, (x,y)\in U^+_\lambda\},
\end{align*}
then
\begin{center}
$U^+ \subset H^{-1}(U^+) \subset (H^2)^{-1}(U^+) \subset \cdots.$
\end{center}
Consider the biholomorphism \[(\lambda,x,y) \mapsto (\lambda,x,\phi(\lambda,x,y))=(\lambda,x,t)\] from $U^+$ onto $\{(\lambda,x,t) \in \mathbb{D} \times \mathbb{C} \times \mathbb{C}\backslash\mathbb{\overline{D}} : |t|>M \max\{R,|x|\} \}$, with inverse \[(\lambda,x,t) \mapsto (\lambda,x,\theta(\lambda,x,t)).\] 
Consider the map $(\lambda,x,t) \mapsto(\lambda,\tilde{s},t)$, where
\[\tilde{s}=t \int_0^x \frac{\partial \theta}{\partial t}(\lambda,\xi,t)d\xi.\]
Then $H$ in $(\lambda,\tilde{s},t)$ coordinates is
\[(\lambda,\tilde{s},t)\mapsto (\lambda,x,y) \mapsto H(\lambda,x,y) \mapsto (\sigma(\lambda),\tilde{s}_1,t_1),\]
here $t_1=t^d$. Comparing the Jacobian determinants, we get 
\[d\sigma'(\lambda)\frac{\partial \tilde{s}_1}{\partial \tilde{s}} t^{d-1}=\delta_\lambda \sigma'(\lambda)t^{d-1},\]
which gives $\tilde{s}_1 = (\delta_\lambda/d) \tilde{s} +C(\lambda,t)$. The image of $(\lambda,0,t)$ under the map $(\lambda,\tilde{s},t) \mapsto (\sigma(\lambda),\tilde{s}_1,t_1)$ is
\[\left(\sigma(\lambda), t^d \int_0^{\theta(\lambda,0,t)} \frac{\partial \theta}{\partial t}(\sigma(\lambda),\xi,t^d)d\xi,t^d\right).\] Hence \[C(\lambda,t)=  t^d \int_0^{\theta(\lambda,0,t)} \frac{\partial \theta}{\partial t}(\sigma(\lambda),\xi,t^d)d\xi=C_\lambda(t)\] is holomorphic for $\lambda \in \mathbb{D}$, $|t|>MR$. Let $R(\lambda,t)=R_\lambda(t)$ and $s=\tilde{s}+R(\lambda,t)$. Then $H$ in $(\lambda,s,t)$ coordinates is \[(\lambda,s,t) \mapsto \left(\sigma(\lambda), \frac{\delta_\lambda}{d} s +Q(\lambda,t),t^d\right)\] where $Q(\lambda,t)=C(\lambda,t)+R(\sigma(\lambda),t)-(\delta_\lambda/d)R(\lambda,t)=Q_\lambda(t)$. Let $\psi=s$, then $\psi(\lambda,x,y)=\psi_\lambda(x,y)$ and \[\psi\left(H(\lambda,x,y)\right)=\frac{\delta_\lambda}{d} \psi(\lambda,x,y) +Q\left(\lambda,\phi(\lambda,x,y)\right).\]
The map \[(\lambda,x,y) \mapsto(\sigma(\lambda),\psi(\lambda,x,y),\phi(\lambda,x,y))\] is an injective holomorphic map on $U^+$.
Let $\hat{U}^+=\{(\lambda,\hat{z}) : \lambda \in \mathbb{D}, \hat{z} \in \hat{U}^+_\lambda$\}. Consider the map $\hat{H}: \hat{I}^+ \to \hat{I}^+$ defined by $\hat{H}(\lambda,\hat{z})=\left(\sigma(\lambda),\hat{H}_\lambda(\hat{z})\right)$. Let $\hat{\tilde{V}}^+ = \{[(\lambda,z,C)] \in \hat{I}^+ : C$ is a path in $\tilde{V}^+ \}$. Then it follows that 
\begin{center}
$\hat{I}^+=\bigcup_{n \geq 0} (\hat{H}^n)^{-1}(\hat{\tilde{V}}^+)$ and $\hat{I}^+=\bigcup_{n \geq 0} (\hat{H}^n)^{-1}(\hat{U}^+)$.
\end{center}
\noindent
Define $\hat{\phi}:\hat{I}^+ \to \mathbb{C}\backslash\mathbb{\overline{D}}$ by
\[
(\lambda,\hat{z}) \mapsto \hat{\phi}_\lambda (\hat{z})
\]
and $\hat{\psi}:\hat{I}^+ \to \mathbb{C}$ by
\[
\hat{\psi}(\lambda,\hat{z})=\hat{\psi}_\lambda(\hat{z}).
\]
Since $\phi$ and $\psi$ are holomorphic in $\lambda$, so are $\hat{\phi}$ and $\hat{\psi}$. Further, $\hat{\phi} \circ \hat{H}(\lambda,\hat{z}) = \left(\hat{\phi}(\lambda,\hat{z})\right)^d$ and $\hat{\psi} \circ \hat{H}(\lambda,\hat{z}) = (\delta_\lambda/d) \hat{\psi}(\hat{z}) + Q\left(\lambda,(\hat{\phi}(\hat{z})^d\right).$ Define $\hat \Phi : \hat{I}^+ \to \mathbb{D} \times \mathbb{C} \times \mathbb{C} \backslash \mathbb{\overline{D}}$ by 
\[
\hat{\Phi}(\lambda,\hat{z}) = (\lambda,\hat{\psi}(\lambda,\hat{z}), \hat{\phi}(\lambda,\hat{z})),
\]
and $\tilde{G}:\mathbb{D} \times \mathbb{C} \times \mathbb{C} \backslash \mathbb{\overline{D}} \to \mathbb{D} \times \mathbb{C} \times \mathbb{C} \backslash \mathbb{\overline{D}}$ by 
\[
\tilde{G}(\lambda,s,t)=\left(\sigma(\lambda),G_{\lambda}(s,t)\right).
\]
Then for $n\geq 1$, 
\[
\hat{\Phi} \circ \hat{H}^n (\lambda,\hat{z})= (\sigma^{n}(\lambda),G^n_\lambda \circ \hat{\Phi}_\lambda(\hat{z})) = \tilde{G}^n \circ \hat{\Phi}(\lambda,\hat{z}).\]
Let $W= \{(\lambda,s,t) : \lambda\in \mathbb{D}, (s,t) \in W_\lambda\}=\hat{\Phi}(\hat U^+)$. Then $\tilde{G}(W) \subset W$, which gives $W \subset \tilde{G}^{-1}(W) \subset (\tilde{G}^2)^{-1}(W) \subset \cdots$. Further, since we have fiberwise equality, $ \mathbb{D} \times \mathbb{C} \times \mathbb{C} \backslash \overline{\mathbb{D}} = \bigcup_{n \geq 0}(\tilde{G}^n)^{-1}(W)$.
Now we see that $\hat{\Phi}$ is a biholomorphism by showing that for every $n\geq 1$, the map $\hat{\Phi}|_{(\hat{H}^n)^{-1}(\hat{U}^+)} : (\hat{H}^n)^{-1}(\hat{U}^+) \to (\tilde{G}^n)^{-1}(W)$ is bijective. Injectivity follows since, for every $\lambda \in \mathbb{D}$, $\hat{\Phi}_\lambda$ is injective . For surjectivity, note that for $(\lambda,\hat{z}) \in \hat U^+$, $\{(\hat H^n)^{-1}((\lambda,\hat z))\}$ has $d^n$ elements which by $\hat{\Phi}$, are mapped injectively to $\{(\tilde{G}^n)^{-1}(\hat \Phi(\lambda,\hat{z}))\}$ which also contains $d^n$ elements. 

\section{Proof of Theorem \ref{thm2}}
{\it Step 1.} In this step, through a lemma, we determine the form of the deck transformations of the covering map $(\hat{\Phi})^{-1}\circ \pi:\mathbb{D}\times\mathbb{C}\times\mathbb{C}\backslash\mathbb{\overline{D}}\to I^+$.
\begin{lem} For every element $[\frac{k}{d^n}] \in \mathbb{Z}[1/d]/\mathbb{Z}$, there exists a unique deck transformation \[\gamma_{\frac{k}{d^{n}}}: \mathbb{D} \times \mathbb{C} \times \mathbb{C}\backslash\mathbb{\overline{D}} \to \mathbb{D} \times \mathbb{C} \times \mathbb{C}\backslash\mathbb{\overline{D}}\] defined as
\[(\lambda,s,t)\mapsto \left(\lambda,s+\frac{d}{\delta_\lambda}\sum_{l=0}^{\infty} \left(\frac{d^l}{\delta_{\sigma(\lambda)}\cdots\delta_{\sigma^{l}(\lambda)}}\right) \left[Q_{\sigma^l(\lambda)}(t^{d^l})-Q_{\sigma^l(\lambda)}((e^{2 \pi i \frac{k}{d^{n}}}t)^{d^l})\right],e^{2 \pi i \frac{k}{d^{n}}}t \right),\]
with the convention that for $l=0$, $\delta_{\sigma(\lambda)}\cdots\delta_{\sigma^{l}(\lambda)}=1$.
\end{lem}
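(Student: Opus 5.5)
The plan is to compute a deck transformation directly in the coordinates $\hat\Phi=(\lambda,\hat\psi,\hat\phi)$ of Step 7, starting from its action on path classes. Since $\pi_1(I^+)\cong\mathbb{Z}[1/d]$ is abelian, the subgroup $\mathbb{Z}$ is normal. The covering $\hat I^+\to I^+$ is therefore regular, and its deck group is $\mathbb{Z}[1/d]/\mathbb{Z}$. Concretely, for a class $[k/d^n]$ choose a loop $\ell$ based at $(\lambda_0,\alpha)$ with $f([\ell])=k/d^n$; for instance the loop $(\lambda_0,(H_{\lambda_0}^{n})^{-1}(\sigma_k))$ from Step 2, conjugated by a path back to $(\lambda_0,\alpha)$. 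The corresponding deck transformation is
\[
\gamma\colon[(\lambda,z,C)]\mapsto[(\lambda,z,\ell C)].
\]
A deck transformation of a connected cover is determined by the image of one point, so there is exactly one transformation per class. This gives uniqueness, and it also shows that $\gamma$ does not depend on the choice of $\ell$ modulo $\mathbb{Z}$. It then remains to show that, transported by $\hat\Phi$, the map $\gamma$ has the displayed form.

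First, $\gamma$ preserves $\lambda$, because $\pi\circ\gamma=\pi$. Next I compute $\hat\phi$. Since $\hat\phi_\lambda([z,C])=\phi_\lambda(\alpha)e^{\int_C\omega}$ (with base point $(\lambda_0,\alpha)$ in the global setting), prepending $\ell$ multiplies $\hat\phi$ by $e^{\int_\ell\omega}=e^{2\pi i k/d^n}$. So, writing $\zeta=e^{2\pi i k/d^n}$, the third coordinate of $\gamma$ is $t\mapsto \zeta t$.

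For $\hat\psi$, take $\hat z=[(\lambda,z,C)]$ with $\hat z\in(\hat H^m)^{-1}(\hat U^+)$, choosing $m\ge n$. This is possible because the sets $(\hat H^m)^{-1}(\hat U^+)$ increase and exhaust $\hat I^+$, and $\gamma$ preserves them since they are preimages of $\pi$-saturated sets. The two lifts $\hat z$ and $\gamma(\hat z)$ lie over the same point. Moreover $\hat H^m$ multiplies $f$ of their path discrepancy by $d^m$, by the relation $f_{\sigma(\lambda)}(H_\lambda C)=d f_\lambda(C)$ from Step 2, and $d^m k/d^n\in\mathbb{Z}$. Hence $\hat H^m(\hat z)=\hat H^m(\gamma\hat z)$. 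Now insert both points into the defining formula for $\hat\psi_\lambda$ on $(\hat H_\lambda^m)^{-1}(\hat U^+_{\sigma^m(\lambda)})$. The terms involving $\hat\psi_{\sigma^m(\lambda)}(\hat H^m_\lambda\,\cdot)$ cancel, and what remains is
\[
\hat\psi(\gamma\hat z)-\hat\psi(\hat z)=\sum_{l=0}^{m-1}\frac{d^{l+1}}{\delta_\lambda\delta_{\sigma(\lambda)}\cdots\delta_{\sigma^l(\lambda)}}\Bigl[Q_{\sigma^l(\lambda)}(t^{d^l})-Q_{\sigma^l(\lambda)}((\zeta t)^{d^l})\Bigr],
\]
where $t=\hat\phi(\hat z)$. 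For $l\ge n$ we have $\zeta^{d^l}=1$, so those terms vanish. The series in the statement is therefore a finite sum, it is independent of $m\ge n$, and it agrees with the formula above. Its coefficients match those in the statement, with the stated convention at $l=0$.

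Finally, the resulting map is holomorphic on all of $\mathbb{D}\times\mathbb{C}\times\mathbb{C}\backslash\overline{\mathbb{D}}$: it is a finite polynomial expression in $t$ with coefficients holomorphic in $\lambda$, because the $\delta$'s and the $Q_{\sigma^l(\lambda)}$ are holomorphic in $\lambda$ and $\sigma|_{\mathbb{D}}$ is holomorphic. It agrees with $\hat\Phi\circ\gamma\circ\hat\Phi^{-1}$ on every exhausting set $\hat\Phi\bigl((\hat H^m)^{-1}(\hat U^+)\bigr)$, hence everywhere. The step needing the most care is the bookkeeping in the previous paragraph. One must confirm that the same $m$ can be used for both lifts, which needs the invariance of the exhausting sets under $\gamma$. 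One must also check the indexing of the products $\delta_{\sigma^{l}(\lambda)}\cdots$ so that it matches the iterated functional equation \eqref{psin} exactly.
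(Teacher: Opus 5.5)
Your proof is correct, but it is organized differently from the paper's.

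The paper argues by induction on $n$ entirely inside the model space $\mathbb{D}\times\mathbb{C}\times\mathbb{C}\backslash\overline{\mathbb{D}}$. Starting from $\gamma_1=\mathrm{id}$, it imposes $\tilde H\circ\gamma_{1/d^{n+1}}=\gamma_{1/d^{n}}\circ\tilde H$ and solves the three coordinate equations for $\gamma_{1/d^{n+1}}$. It then obtains $\gamma_{k/d^n}$ as a $k$-fold composite.

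You instead work with path classes. You read off the rotation $t\mapsto e^{2\pi i k/d^n}t$ directly from $\int_\ell\omega$. You get the second coordinate in one stroke by inserting $\hat z$ and $\gamma\hat z$ into the extension formula for $\hat\psi$, using $\hat H^m\circ\gamma=\hat H^m$ for $m\ge n$; this identity also follows by iterating the paper's conjugacy relation.

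The paper's route is shorter and never returns to the piecewise definition of $\hat\psi$. Yours rests explicitly on $f_{\sigma(\lambda)}([H_\lambda(C)])=d\,f_\lambda([C])$, which is the fact actually underlying the paper's relation. Your route also buys three things:
\begin{itemize}
\item It removes an ambiguity the paper leaves open. The equation $t_1^d=e^{2\pi i/d^n}t^d$ determines $\gamma_{1/d^{n+1}}$ only up to composition with some $\gamma_{j/d}$, whereas your computation pins down which deck transformation belongs to which class.
\item It justifies the uniqueness claim.
\item It shows the series in the statement is a finite sum, since $e^{2\pi i k d^l/d^n}=1$ for $l\ge n$.
\end{itemize}

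One justification you give is wrong, though the gap closes at once. The sets $(\hat H^m)^{-1}(\hat U^+)$ are not preimages of $\pi$-saturated sets. $\hat U^+$ lies in the single sheet $\hat{\tilde V}^+$ over $\tilde V^+_R$, and every nontrivial deck transformation moves that sheet off itself: for $[(\lambda,z,\ell C)]$ to lie in the sheet you would need $f([\ell])\in\mathbb{Z}$. What you actually need is only that $\gamma\hat z\in(\hat H^m)^{-1}(\hat U^+)$ whenever $\hat z$ is and $m\ge n$. This follows immediately from $\hat H^m(\gamma\hat z)=\hat H^m(\hat z)$, which you prove in the very next step. So drop the saturation claim and cite that identity instead.
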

\begin{proof}
By Theorem(\ref{tmh1}), the group of deck transformations of the covering map $\hat{\Phi}^{-1}\circ \pi : \mathbb{D} \times\mathbb{C} \times \mathbb{C}\backslash \mathbb{\overline{D}} \to I^+$ is isomorphic to $\mathbb{Z}[1/d]/\mathbb{Z}$ and the lift of H is given by $\tilde{H}(\lambda,s,t) = \left(\sigma(\lambda),(\delta_\lambda/d)s+Q(\lambda,t),t^d\right)$. First, we construct the deck transformation corresponding to the class $[1/d^{n}]$ by induction. For $n=0$, $\gamma_1$ is the identity map. Assume that $\gamma_{\frac{1}{d^{n}}}$ has the form given in the lemma. Since $H$ induces identity on the fundamental group of the global escaping set, $\tilde{H} \circ \gamma_{\frac{1}{d^{n+1}}}(\lambda,s,t)=\gamma_{\frac{1}{d^{n}}} \circ \tilde{H}(\lambda,s,t)$. Let $(\lambda_1,s_1,t_1)=\gamma_{1/d^{n+1}}(\lambda,s,t)$. Then $\tilde{H}(\lambda_1,s_1,t_1)=\gamma_{\frac{1}{d^{n}}}\circ \tilde{H}(\lambda,s,t)$ gives
\begin{align*}
    \sigma(\lambda_1)&=\sigma(\lambda)\implies \lambda_1=\lambda,\\
    \left(\frac{\delta_{\lambda_1}}{d}\right)s_1+Q_{\lambda_1}(t_1)&=\left(\frac{\delta_\lambda}{d}\right)s+Q_\lambda(t)\\
    & +\frac{d}{\delta_{\sigma(\lambda)}}\sum_{l=0}^{\infty}\left(\frac{d^l}{\delta_{\sigma^2(\lambda)}\cdots\delta_{\sigma^{l+1}(\lambda)}}\right)\left[Q_{\sigma^{l+1}(\lambda)}(t^{d^{l+1}})-Q_{\sigma^{l+1}(\lambda)}((e^{2 \pi i \frac{1}{d^{n+1}}}t)^{d^{l+1}})\right],\\
    t_1^d & =e^{2\pi i \frac{1}{d^{n}}}t^d.
\end{align*}
The above equations give the form of $\gamma_{\frac{1}{d^{n+1}}}$. Further, $\gamma_{\frac{k+1}{d^{n}}}=\gamma_{\frac{1}{d^{n}}}\circ \gamma_{\frac{k}{d^{n}}}$ proves the lemma.
\end{proof}

\medskip
\no
{\it Step 2.} 
For $|y|>R$, consider the map $S:(\lambda,x,y) \mapsto \left(\lambda,x/y,1/y\right)=(\lambda,\xi,\eta)$ with inverse $(\lambda,\xi,\eta) \mapsto \left(\lambda,\xi/\eta,1/\eta\right)$. Then $H$ in $(\lambda,\xi,\eta)$ coordinates is
\begin{align*}
(\lambda,\xi,\eta) & \mapsto \left(\lambda,\frac{\xi}{\eta}, \frac{1}{\eta}\right) \mapsto \left(\sigma(\lambda),\frac{1}{\eta},q_\lambda\left(\frac{1}{\eta}\right)-\delta_\lambda \left(\frac{\xi}{\eta}\right)\right)\\
& \mapsto \left(\sigma(\lambda),\frac{\eta^{d-1}}{\eta^d q_\lambda(1/\eta)-\delta_\lambda \xi \eta^{d-1}}, \frac{\eta^d}{\eta^d q_\lambda(1/\eta)-\delta_\lambda \xi \eta^{d-1}}\right).
\end{align*}
\noindent
Recall from section~2 the coordinate change $(\lambda,x,y) \mapsto (\lambda,x,\phi(\lambda,x,y))=(\lambda,x,t)$. Then
\begin{align*}
\frac{1}{t}= \frac{1}{\phi(\lambda,x,y)}=\frac{1}{\phi(\lambda,\xi/\eta,1/\eta)}=\eta e^{-\gamma(\lambda,\xi/\eta,1/\eta)} = \eta X(\lambda,\xi,\eta),
\end{align*}
where $X$ is holomorphic on $\{(\lambda,\xi,\eta) : \lambda \in \mathbb{D}, 0< |\eta|<1/R, |\xi|<1\}$. Since $\phi(\lambda,x,y) \approx y$ as $y \to \infty$, we have $X(\lambda,\xi,0)= 1$ for every $\lambda$, $\xi$. Thus, 
\[
X(\lambda,\xi,\eta)=1+\eta \alpha(\lambda,\xi,\eta)
\]
where $\alpha(\lambda,\xi,\eta)$ is a power series in the variables $\lambda$, $\xi$, and $\eta$.
Consider 
\begin{align*}
(\lambda,\xi,\eta) \mapsto \left(\lambda,\frac{\xi/\eta}{\phi(\lambda,\xi/\eta,1/\eta)},\frac{1}{\phi(\lambda,\xi/\eta,1/\eta)}\right)=(\lambda,x/t,1/t)=(\lambda,u,v),
\end{align*}
where $v=\eta+\eta^2 \alpha(\lambda,\xi,\eta)$ and $\eta=v+T_1(\lambda,u,v),$ such that the power series of $T_1$  has total degree $\geq 2$ in $u$ and $v$.
Thus,
\begin{align*}
\frac{1}{t}=v & =\eta+\eta^2\alpha(\lambda,\xi,\eta)\\
&=\eta\left[1+\left(v+T_1(\lambda,u,v)\right)\alpha(\lambda,u,v)\right]\\
& =\eta\left[1+D_{\lambda,0} v+\alpha_1(\lambda,u,v)\right],
\end{align*}
where $\alpha_1$ is power series in $\lambda$, $u$, and $v$ such that the total degree in $u$ and $v$ of the terms in the series is $\geq 2$.
Hence, we have 
\[
y=t\left(1+\frac{c_\lambda}{t}+\alpha_1(\lambda,x/t,1/t)\right),
\]
and for $D_{\lambda,0}, A_{\lambda,i}^j\in \mathbb{C}$,
\[
y(\lambda,x,t)=t+D_{\lambda,0}+ \frac{A^0_{\lambda,-1}}{t}+\cdots+x\left(\frac{A^1_{\lambda,-1}}{t}+\cdots\right)+\cdots.
\]
Therefore, for $D_{\lambda,i}\in \mathbb{C}$,
\[
y(\lambda,0,t)=y(\lambda,t)=t+D_{\lambda,0}+\frac{D_{\lambda,1}}{t}+\frac{D_{\lambda,2}}{t^2}+\cdots.\]
Since $t=\phi(\lambda,x,y)\approx y$ as $y \to \infty$, for $L_{\lambda,i}\in \mathbb{C}$,
\[
t(\lambda,0,y)=t(\lambda,y)=y+\frac{L_{\lambda,1}}{y}+\frac{L_{\lambda,2}}{y^2}+\cdots,
\]
which gives
\[
y=\left(y+\frac{L_{\lambda,1}}{y}+\cdots\right)+D_{\lambda,0}+D_{\lambda,1}\left(y+\frac{L_{\lambda,1}}{y}+\cdots\right)^{-1}+\cdots.\]
Comparing the coefficients, we obtain $D_{\lambda,0}=0$. Hence we have 
\[
y=t+\frac{D_{\lambda,1}}{t}+\frac{D_{\lambda,2}}{t^2}+\cdots.
\]
Further, note that $Q(\lambda,t)$ is polynomial part of $t^d y(\lambda,0,t)$. Thus, $Q(\lambda,t)=t^{d+1}+D_{\lambda,1}t^{d-1}+D_{\lambda,2}t^{d-2}+\cdots+D_{\lambda,d}$.

\medskip
\no
{\it Step 3.} Before proceeding further, we fix the notation. Let $I_H^+$ and $I_F^+$ be the global escaping sets of $H$ and $F$ respectively. The maps $p_H:\mathbb{D} \times \mathbb{C} \times \mathbb{C}\backslash\mathbb{\overline{D}}\to I^+_H$ and $p_F:\mathbb{D} \times \mathbb{C} \times \mathbb{C}\backslash\mathbb{\overline{D}}\to I^+_F$ are coverings constructed in the above section.
Let $\Gamma:I_{H}^+ \to I_{F}^+$ be a biholomorphism, which lifts to a biholomorphism $\tilde{\Gamma}:\mathbb{D} \times \mathbb{C} \times \mathbb{C}\backslash\mathbb{\overline{D}} \to \mathbb{D} \times \mathbb{C} \times \mathbb{C}\backslash\mathbb{\overline{D}}$. First, we determine the form of automorphisms of $\mathbb{D} \times \mathbb{C} \times \mathbb{C}\backslash\mathbb{\overline{D}}$. 
Let $\tilde{\Gamma}(\lambda,s,t) = (A_1(\lambda,s,t),A_2(\lambda,s,t),A_3(\lambda,s,t))$. Fix $\lambda$ and $t$, then the maps 
\begin{center} 
$s \mapsto A_1(\lambda,s,t)$ and $s \mapsto A_3(\lambda,s,t)$
\end{center}
\noindent
from $\mathbb{C} \to \mathbb{D}$ and $\mathbb{C} \to \mathbb{C}\backslash\mathbb{\overline D}$ respectively, are holomorphic, hence constant. Thus, $A_1$ and $A_3$ are independent of $s$.
Let $\tilde \Gamma^{-1} : \mathbb{D} \times \mathbb{C} \times \mathbb{C}\backslash \mathbb{\overline D} \to \mathbb{D} \times \mathbb{C} \times \mathbb{C}\backslash\mathbb{\overline D}$ defined by 
\[
(\lambda,s,t) \to (A^*_1(\lambda,t),A^*_2(\lambda,s,t),A^*_3(\lambda,t))
\]
be the inverse of $\tilde{\Gamma}$. Then
\[\left(A^*_1(A_1(\lambda,t),A_3(\lambda,t)),A^*_2(A_1(\lambda,t),A_2(\lambda,s,t),A_3(\lambda,t)),A^*_3(A_1(\lambda,t),A_3(\lambda,t))\right)=(\lambda,s,t).
\]
Comparing second coordinate we get that for fixed $\lambda$ and $t$, the maps 
\[
s \mapsto A_2(\lambda,s,t)
\text{ and }
s \to A^*_2(A_1(\lambda,t),s,A_3(\lambda,t))
\]
are inverses of each other.
Hence, the map 
\[
s \mapsto A_2(\lambda,s,t)
\] 
from $\mathbb{C} \to \mathbb{C}$ is a biholomorphism. Therefore, it is of the form
\[
s \mapsto \beta'(\lambda,t)s+\gamma'(\lambda,t)
\]
where $\beta'$ and $\gamma'$ are holomorphic functions. Thus,
\[
\tilde{\Gamma}(\lambda,s,t)=(A_1(\lambda,t),\beta'(\lambda,t)s+\gamma'(\lambda,t),A_3(\lambda,t)).
\]
\noindent
Now, let $p \in I^+_H$ and $\tilde{p}=(\lambda,s,t) \in \mathbb{D} \times \mathbb{C} \times \mathbb{C}\backslash\mathbb{\overline D}$ be in fiber of $p$ above $p_{H}$. Since the cover is normal, all other elements in the fiber of $p$ are images of deck transformations at $\tilde{p}$, that is, 
\[
p_{H}^{-1}(p)=\biggl\{\gamma_{H,{\frac{k}{d^{n}}}}(\tilde{p}) : k \geq 1,n \geq 0 \biggr\},
\] 
where
\begin{align*}
\gamma_{H,\frac{k}{d^{n}}}(\lambda,s,t) = & \left( \lambda, s+\frac{d}{\delta_{H,\lambda}}\sum_{l=0}^{\infty} \left(\frac{d}{\delta_H}\right)^l \left[Q_{H,\lambda}(t^{d^l})- Q_{H,\lambda}((e^{2 \pi i \frac{k}{d^{n}}}t)^{d^l})\right],e^{2 \pi i \frac{k}{d^{n}}}t \right).
\end{align*}
\noindent
Similarly for F, we have deck transformations $\gamma_{F,\frac{k}{d^{n}}}$. Since $\Gamma \circ p_H = p_F \circ \tilde{\Gamma}$, if two elements are in fiber of $p$ above $p_H$ then their images under the biholomorphism $\tilde{\Gamma}$ are in the fiber of $\Gamma(p)$ above $p_F$. Note that $\tilde{\Gamma}\circ \gamma_{H,\frac{k}{d^{n}}}\circ\tilde{\Gamma}^{-1}$ is a deck transformation of $p_F$. Therefore, we obtain
\[
\tilde{\Gamma}\circ\gamma_{H,\frac{k}{d^{n}}}=\gamma_{F,\Phi\left(\frac{k}{d^{n}}\right)}\circ\tilde{\Gamma}.
\]
Now, we prove that $\Phi$ is a group homomorphism. Let $k/d^n, k_1/d^{n_1}\in \mathbb{Z}[1/d]/\mathbb{Z}$, then
\begin{align*}
\tilde{\Gamma}\circ\gamma_{H,\frac{k}{d^n}+\frac{k_1}{d^{n_1}}} &=\tilde{\Gamma}\circ\gamma_{H,\frac{k}{d^n}}\circ\gamma_{H,\frac{k_1}{d^{n_1}}}\\
&=\gamma_{F,\Phi\left(\frac{k}{d^n}\right)}\circ\tilde{\Gamma}\circ\gamma_{H,\frac{k_1}{d^{n_1}}}\\
&= \gamma_{F,\Phi\left(\frac{k}{d^n}\right)}\circ\gamma_{F,\Phi\left(\frac{k_1}{d^{n_1}}\right)}\circ\tilde{\Gamma}\\
&=\gamma_{F,\Phi\left(\frac{k}{d^n}\right)+\Phi\left(\frac{k_1}{d^{n_1}}\right)}\circ\tilde{\Gamma}.
\end{align*}
Thus,
\[
\Phi\left(\frac{k}{d^n}+\frac{k_1}{d^{n_1}}\right)=\Phi\left(\frac{k}{d^n}\right)+\Phi\left(\frac{k_1}{d^{n_1}}\right).
\]
Thus $\Phi$ is a group homomorphism. Applying the same argument to $\tilde{\Gamma}^{-1}$ gives that $\Phi$ is a group isomorphism.
Since every deck transformation fixes the first coordinate, comparing the first coordinates in $\tilde{\Gamma}\circ\gamma_{H,\frac{k}{d^{n}}}=\gamma_{F,\Phi\left(\frac{k}{d^{n}}\right)}\circ\tilde{\Gamma}$
we obtain
\[
A_1(\lambda,e^{2\pi ik/d^n}t)=A_1(\lambda,t).
\]
Therefore, $A_1(\lambda,t)=A_1(\lambda)$ is independent of $t$. Note that $A_1:\mathbb{D}\to \mathbb{D}$ is a biholomorphism. Further, the map
\[
(\lambda,t)\mapsto(A_1(\lambda),A_3(\lambda,t))
\]
is a biholomorphism from $\mathbb{D}\times\mathbb{C}\backslash\mathbb{\overline{D}}$ onto itself.
Fix $\lambda\in\mathbb{D}$.
Then
\[
t\mapsto A_3(A_1^{-1}(\lambda),t)
\]
is a biholomorphic self-map of
$\mathbb C\backslash\overline{\mathbb{D}}$. Since every automorphism of
$\mathbb C\backslash\overline{\mathbb{D}}$ is a rotation,
\[
A_3(A_1^{-1}(\lambda),t)=e^{i\alpha_3(A_1^{-1}(\lambda))}t,
\]
where $\alpha_3:\mathbb{D}\to\mathbb{R}$ is a real-valued function. Since $A_3$ is holomorphic in $\lambda$, $\alpha_3$ is constant. Thus, $A_3(\lambda,t)=A_3t$, where $|A_3|=1$.  
Further, comparing the third coordinate of $\gamma_{F,\Phi(\frac{k}{d^n})} \circ \tilde{\Gamma}= \tilde{\Gamma} \circ \gamma_{H,\frac{k}{d^n}}$, we get $e^{2\pi i k/d^n}A_3t= e^{2\pi i \Phi(k/d^n)}A_3 t$. Thus, $\Phi$ is identity map. Thus, $\gamma_{F,\frac{k}{d^n}} \circ \tilde{\Gamma}= \tilde{\Gamma} \circ \gamma_{H,\frac{k}{d^n}}$
An easy computation gives 
\begin{equation}\label{eq3.2}
\begin{split}
proj_2 \circ \tilde{\Gamma}\circ \gamma_{H,\frac{k}{d^n}}(\lambda,s,t)=& \beta'(\lambda,e^{2 \pi i k/d^n}t)s+\gamma'(\lambda,e^{2\pi i k/d^n}t)\\
&+\beta'(\lambda,e^{2 \pi i k/d^n}t)\frac{d}{\delta_{H}}\sum_{l=0}^{\infty}\left(\frac{d}{\delta_H}\right)^l\cdot
\left[Q_{H,\lambda}(t^{d^l})-Q_{H,\lambda}((e^{2 \pi i k/d^n}t)^{d^l})\right],
\end{split}
\end{equation}
\noindent
and
\begin{equation}\label{eq3.5}
\begin{split}
proj_2 \circ \gamma_{F,\frac{k}{d^n}}\tilde{\Gamma}(\lambda,s,t)= \beta'(\lambda,t)s+& \gamma'(\lambda,t)+\frac{d}{\delta_F}\sum_{l=0}^{\infty}\left(\frac{d}{\delta_F}\right)^l\cdot\\
& \left[Q_{F,A_1(\lambda)}((A_3t)^{d^l})-Q_{F,A_1(\lambda)}((e^{2 \pi i k/d^n}A_3t)^{d^l})\right].
\end{split}
\end{equation}
\noindent
Comparing \eqref{eq3.2} and \eqref{eq3.5} we get $\beta'(\lambda,e^{2\pi ki/d^n}t)=\beta'(\lambda,t)$. So $\beta'$ is independent of $t$, that is, $\beta'(\lambda,t)=\beta'(\lambda)$.
Let
\begin{align*}
\Delta_H(\lambda,t)= &\beta'(\lambda)\frac{d}{\delta_{H}}\sum_{l=0}^{\infty}\left(\frac{d}{\delta_H}\right)^l\left[Q_{H,\lambda}(t^{d^l})-Q_{H,\lambda}((e^{2 \pi i /d^n}t)^{d^l})\right],
\end{align*}
 and 
\begin{align*}
 \Delta_F(\lambda,t) = \frac{d}{\delta_F}\sum_{l=0}^{\infty}\left(\frac{d}{\delta_F}\right)^l\left[Q_{F,A_1(\lambda)}((A_3 t)^{d^l})-Q_{F,A_1(\lambda)}((e^{2 \pi i /d^n}A_3 t)^{d^l})\right].
\end{align*}
Equations \eqref{eq3.2} and \eqref{eq3.5} give $\Delta_H(\lambda,t)-\Delta_F(\lambda,t)=\gamma'(\lambda,e^{2 \pi i /d^n}t)-\gamma'(\lambda,t)$.
If we fix $\lambda$ and $t$, then $\gamma'(\lambda,e^{2 \pi i /d^n}t)-\gamma'(\lambda,t)$ is bounded for $n \geq 1$ and hence $\Delta_H(\lambda,t)-\Delta_F(\lambda,t)$ is bounded for $n \geq 1$. For $D_{H,\lambda,i},D_{F,\lambda,i}\in \mathbb{C}$, $Q_{H,\lambda}(t) = t^{d+1}+D_{H,\lambda,1}t^{d-1}+\cdots+D_{H,\lambda,d}$ and $Q_{F,\lambda}(t) = t^{d+1}+D_{F,\lambda,1}t^{d-1}+\cdots+D_{F,\lambda,d}$.
The highest degree term of $\Delta_H(\lambda,t)$ is 
\[
\beta'(\lambda)\left(\frac{d}{\delta_H}\right)^n(1-e^{2 \pi i /d})t^{d^{n-1}(d+1)}
\]
and of $\Delta_F(\lambda,t)$ is 
\[
\left(\frac{d}{\delta_F}\right)^n(1-e^{2 \pi i /d})A_3^{d^{n-1}(d+1)}t^{d^{n-1}(d+1)}.
\]
Fix $\lambda \in \mathbb{D}$. Since $\Delta_H(\lambda,t)-\Delta_F(\lambda,t)$ is bounded for $n \geq 1,$
\[
\lim_{n\to \infty}\left(\frac{\delta_H}{\delta_F}\right)^n A_3^{(d+1)d^{n-1}}=\beta'(\lambda) \text{  and  }\lim_{n\to \infty}\left(\frac{\delta_H}{\delta_F}\right)^{n+1} A_3^{(d+1)d^n}=\beta'(\lambda).
\]
Dividing, we get 
\[
\lim_{n\to \infty}\left(\frac{\delta_H}{\delta_F}\right) A_3^{(d^2-1)d^{n-1}}=1,
\] 
and 
\[
\lim_{n\to \infty}\left(\frac{\delta_H}{\delta_F}\right) A_3(\lambda)^{(d^2-1)d^n}=1.
\] 
Again, dividing, we get 
\[
\lim_{n\to \infty}A_3^{(d^2-1)(d-1)d^{n-1}}=1,
\]
which gives $\delta_H^{d-1}=\delta_F^{d-1}$.
First, we tackle the case where $\delta_{H}=\delta_{F}=\delta$. In this case, the highest degree term of $\Delta_H(\lambda,t)-\Delta_F(\lambda,t)$ is \[\left(\frac{d}{\delta}\right)^n(1-e^{2\pi i/d})\left(\beta'(\lambda)-A_3^{d^{n-1}(d+1)}\right)t^{d^{n-1}(d+1)}.\]
Hence, 
\[
A_3^{d^{n-1}(d+1)} \to \beta'(\lambda)
\]
as $n \to \infty$ otherwise, $\Delta_H(\lambda,t)-\Delta_F(\lambda,t) \to \infty$ as $n \to  \infty$. But  $\Delta_H(\lambda,t)-\Delta_F(\lambda,t)$ is bounded for $n\geq 1$. Note that $\beta'(\lambda)=\beta'$ is independent of $\lambda$. Further,
\begin{center}
$A_3^{d^{n}(d+1)} \to \beta'$ and $A_3^{d^{n}(d+1)} \to \beta'^d$ as $n \to \infty$,
\end{center}
gives $\beta'^{d-1}=1$. As observed in \cite{Pal}, since $\beta'$ is a repelling fixed point of $z\mapsto z^d$, for $n_0$ large enough, 
\begin{center}
$A_3^{d^n(d+1)}=\beta' \implies A_3^{d^n(d+1)(d-1)}=1$ for $n \geq n_0$.
\end{center}
\noindent
For some $k_1$ with $1 \leq k_1 \leq d-1$, $\beta'=e^{2\pi i k_1 / (d-1)}$, and $A_3^{d^{n_0}}=e^{2\pi i k_1 / (d^2-1)}$. Let $A_4=e^{2\pi i k_1 / (d^2-1)} \implies A_4^{d+1}=\beta'$. And for some $1\leq k_2 \leq d^{n_0}$, $A_3=e^{2\pi i k_2 / d^{n_0}}A_4$. Let \[\tilde{\Gamma}_1=\tilde{\Gamma} \circ \gamma^{-1}_{H,\frac{k_2}{d^{n_0}}},\] then, $\pi_F\circ \tilde{\Gamma}_1=\pi_F \circ \tilde{\Gamma} \circ \gamma^{-1}_{H,\frac{k_2}{d^{n_0}}}= \Gamma \circ \pi_H \circ \gamma^{-1}_{H,\frac{k_2}{d^{n_0}}}= \Gamma \circ \pi_H \implies \tilde{\Gamma}_1$ is a lift of $\Gamma$, hence is of the form, \[\tilde{\Gamma}_1(\lambda,s,t)=(A_1(\lambda),\beta' s +\gamma'(\lambda,t),A_4 t).\]
Without loss of generality, we write $\tilde{\Gamma}$ in place of $\tilde{\Gamma}_1$, and $A_3$ in place of $A_4$. We have $A_3^{d^n(d+1)}=\beta'$, and $A_3^{d^n(d+1)(d-1)}=1$ for $n\geq 0$. For $0\leq l \leq n-1$, there are no terms of degree $d^l(d+1)$  in $\Delta_H(\lambda,t)-\Delta_F(\lambda,t)$. The coefficient of $d^{n-1}(d-1)$ is \[\left(\frac{d}{\delta}\right)^n(1-e^{2\pi i/d})\left(\beta'D_{H,\lambda,1} - A_3^{d^{n-1}(d-1)}D_{F,A_1(\lambda),1}\right).\] Again, since $\Delta_H(\lambda,t)-\Delta_F(\lambda,t)$ is bounded for $n\geq 1$, so \[\beta'D_{H,\lambda,1} - A_3^{d^{n-1}(d-1)}D_{F,A_1(\lambda),1} \to 0,\] as $n\to \infty$. Since $A_3^{d^2}=A_3$, if n is even then $A_3^{d^{n-1}}=A_3^d$ and if n is odd then $A_3^{d^{n-1}}=A_3.$ Hence, 
\[
A_3^{2}D_{H,\lambda,1} =D_{F,A_1(\lambda),1} \text{ and } A_3^{2 d}D_{H,\lambda,1} =D_{F,A_1(\lambda),1}.
\]
Similarly, for $1 \leq i \leq d-1$, 
\[
A_3^{i+1}D_{H,\lambda,i} =D_{F,A_1(\lambda),i} \text{ and } A_3^{(i+1) d}D_{H,\lambda,i} =D_{F,A_1(\lambda),i}.
\] 
In further computations, we use
\begin{equation}\label{DHF}
    A_3^{(i+1) d}D_{H,\lambda,i} =D_{F,A_1(\lambda),i}.
\end{equation}
Thus, $\Delta_H(\lambda,t)-\Delta_F(\lambda,t)= 0$ which implies $\gamma'(\lambda,t)=\gamma'(\lambda,e^{2\pi i/d^n}t)$. Thus, $\gamma'(\lambda,t)=\gamma'(\lambda)$ is independent of $t$.

\medskip
\noindent
{\it Step 4.}
Now, for $D_{H,\lambda,i},D_{F,\lambda,i}\in \mathbb{C}$,
\[
y_H(\lambda,t)=t+\frac{D_{H,\lambda,1}}{t}+\frac{D_{H,\lambda,2}}{t^2}+\cdots+\frac{D_{H,\lambda,d-1}}{t^{d-1}}+O(t^{-d})
\]
and
\[
y_F(\lambda,t)=t+\frac{D_{F,\lambda,1}}{t}+\frac{D_{F,\lambda,2}}{t^2}+\cdots+\frac{D_{F,\lambda,d-1}}{t^{d-1}}+O(t^{-d}).
\]
Moreover
\[
t_H(\lambda,y)=y+\frac{L_{H,\lambda,1}}{y}+\frac{L_{H,\lambda,2}}{y^2}+\cdots+\frac{L_{H,\lambda,d-1}}{y^{d-1}}+O(y^{-d})
\]
and 
\[
t_F(\lambda,y)=y+\frac{L_{F,\lambda,1}}{y}+\frac{L_{F,\lambda,2}}{y^2}+\cdots+\frac{L_{F,\lambda,d-1}}{y^{d-1}}+O(y^{-d}).
\]
\noindent
Since $t_H(\lambda,y_H(\lambda,t))=t$, and $t_F(\lambda,y_F(\lambda,t))=t$,
\begin{align*}
t  = & t+\left[\frac{D_{H,\lambda,1}}{t}+\frac{D_{H,\lambda,2}}{t^2}+\cdots+\frac{D_{H,\lambda,d-1}}{t^{d-1}}+O(t^{-d})\right] \\
& +L_{H,\lambda,1}\left[t+\frac{D_{H,\lambda,1}}{t}+\frac{D_{H,\lambda,2}}{t^2}+\cdots+\frac{D_{H,\lambda,d-1}}{t^{d-1}}+O(t^{-d})\right]^{-1}\\
& +L_{H,\lambda,2}\left[t+\frac{D_{H,\lambda,1}}{t}+\frac{D_{H,\lambda,2}}{t^2}+\cdots+\frac{D_{H,\lambda,d-1}}{t^{d-1}}+O(t^{-d})\right]^{-2}+\cdots, 
\end{align*}
which implies
\begin{equation}{\label{t_H}}
\begin{split}
0  = & \left[\frac{D_{H,\lambda,1}}{t}+\frac{D_{H,\lambda,2}}{t^2}+\cdots+\frac{D_{H,\lambda,d-1}}{t^{d-1}}+O(t^{-d})\right] \\
& +\frac{L_{H,\lambda,1}}{t}\left[1+\frac{D_{H,\lambda,1}}{t^2}+\frac{D_{H,\lambda,2}}{t^3}+\cdots+\frac{D_{H,\lambda,d-1}}{t^{d}}+O(t^{-(d+1)})\right]^{-1}\\
& +\frac{L_{H,\lambda,2}}{t^2}\left[1+\frac{D_{H,\lambda,1}}{t^2}+\frac{D_{H,\lambda,2}}{t^3}+\cdots+\frac{D_{H,\lambda,d-1}}{t^{d}}+O(t^{-(d+1)})\right]^{-2}+\cdots .
\end{split}
\end{equation}
Similarly, 
\begin{equation}{\label{t_F}}
\begin{split}
0 & = \left[\frac{D_{F,\lambda,1}}{t}+\frac{D_{F,\lambda,2}}{t^2}+\cdots+\frac{D_{F,\lambda,d-1}}{t^{d-1}}+O(t^{-d})\right] \\
& +\frac{L_{F,\lambda,1}}{t}\left[1+\frac{D_{F,\lambda,1}}{t^2}+\frac{D_{F,\lambda,2}}{t^3}+\cdots+\frac{D_{F,\lambda,d-1}}{t^{d}}+O(t^{-(d+1)})\right]^{-1}\\
& +\frac{L_{F,\lambda,2}}{t^2}\left[1+\frac{D_{F,\lambda,1}}{t^2}+\frac{D_{F,\lambda,2}}{t^3}+\cdots+\frac{D_{F,\lambda,d-1}}{t^{d}}+O(t^{-(d+1)})\right]^{-2}+\cdots .
\end{split}
\end{equation}
We now prove that for $1\leq i\leq d-1$, $A_3^{(i+1)d}L_{H,\lambda,i}=L_{F,A_1(\lambda),i}$. For simplicity, write
\[
D_H(t)=\sum_{i\ge1}\frac{D_{H,\lambda,i}}{t^i}, \text{ and } D_F(t)=\sum_{i\ge1}\frac{D_{F,\lambda,i}}{t^i}.
\]
Then \eqref{t_H} and \eqref{t_F} give
\begin{equation}\label{eq:H}
0=D_H(t)+\sum_{m\ge1}\frac{L_{H,\lambda,m}}{t^m}\bigl(1+t^{-1}D_H(t)\bigr)^{-m},
\end{equation}
and
\begin{equation}\label{eq:F}
0=D_F(t)+\sum_{m\ge1}\frac{L_{F,\lambda,m}}{t^m}\bigl(1+t^{-1}D_F(t)\bigr)^{-m}.
\end{equation}
Observe that $t^{-1}D_H(t)=O(t^{-2}),$ so that $
\bigl(1+t^{-1}D_H(t)\bigr)^{-m}=1+O(t^{-2}).$
Hence the coefficient of $t^{-i}$ in \eqref{eq:H} is
\[
D_{H,\lambda,i}+L_{H,\lambda,i}+P_{H,i}(
D_{H,\lambda,1},\ldots,D_{H,\lambda,i-1},L_{H,\lambda,1},\ldots,L_{H,\lambda,i-1}),
\]
where $P_{H,i}$ is a polynomial depending only on coefficients of
lower order. Therefore
\[
L_{H,\lambda,i}=-D_{H,\lambda,i}-P_{H,i}(
D_{H,\lambda,1},\ldots,D_{H,\lambda,i-1},L_{H,\lambda,1},\ldots,L_{H,\lambda,i-1}),
\]
so the coefficients $L_{H,\lambda,i}$ are determined recursively by the coefficients $D_{H,\lambda,i}$ and the previously determined $L_{H,\lambda,j}$, $j<i$. Similar recursion is obtained from \eqref{eq:F}. We now prove
the desired result by induction on $i$. For $i=1$, the coefficient of $t^{-1}$ gives
\[
0=D_{H,\lambda,1}+L_{H,\lambda,1}, \text{ and } 0=D_{F,\lambda,1}+L_{F,\lambda,1},
\]
so
\[
L_{F,\lambda,1}=-D_{F,\lambda,1}=-A_3^{2d}D_{H,\lambda,1}=A_3^{2d}L_{H,\lambda,1}.
\]
Assume that
\[
L_{F,\lambda,j}=A_3^{(j+1)d}L_{H,\lambda,j}, \text{   }j<i.
\]
Since
\[
D_{F,\lambda,j}
=
A_3^{(j+1)d}D_{H,\lambda,j},
\]
for all $j\le i$, every monomial appearing in the polynomial
$P_{H,i}$ acquires the common factor $A_3^{(i+1)d}$.
Hence
\[
P_{F,i}(
D_{F,\lambda,1},\ldots,D_{F,\lambda,i-1},L_{F,\lambda,1},\ldots,L_{F,\lambda,i-1})=A_3^{(i+1)d}P_{H,i}(
D_{H,\lambda,1},\ldots,D_{H,\lambda,i-1},L_{H,\lambda,1},\ldots,L_{H,\lambda,i-1}).
\]
Therefore
\begin{align*}
L_{F,\lambda,i} & =-D_{F,\lambda,i}-P_{F,i}(
D_{F,\lambda,1},\ldots,D_{F,\lambda,i-1},L_{F,\lambda,1},\ldots,L_{F,\lambda,i-1}) \\
&=-A_3^{(i+1)d}D_{H,\lambda,i}-A_3^{(i+1)d}P_{H,i}(
D_{H,\lambda,1},\ldots,D_{H,\lambda,i-1},L_{H,\lambda,1},\ldots,L_{H,\lambda,i-1}) \\
&=A_3^{(i+1)d}\left(-D_{H,\lambda,i}-P_{H,i}(
D_{H,\lambda,1},\ldots,D_{H,\lambda,i-1},L_{H,\lambda,1},\ldots,L_{H,\lambda,i-1})\right) \\
&=A_3^{(i+1)d}L_{H,\lambda,i},
\end{align*}
completing the induction. Thus, 
\begin{equation}{\label{asymp}}
    t_F(A_1(\lambda),A_3 y)-A_3 t_H(\lambda,y)=O(y^{-d}).
\end{equation}
Also, from the definition of B\"{o}ttcher function, we have
\[
t_H(\lambda,y)  =y\cdot \left(\frac{p_{H,\lambda}(y)}{y^d}\right)^{1/d}\cdots = y\cdot \left(\frac{p_{H,\lambda}(y)}{y^d}\right)^{1/d}\cdot\left[1+O(y^{-d})\right].\] 
Then
\begin{align}\label{c_H}
A_3 t_H(\lambda,y)-O(y^{-d}) = A_3 y \Biggl[ 1+ & \frac{1}{d}\left(\frac{c_{H,\lambda,d-2}}{y^2}+\cdots +\frac{c_{H,\lambda,0}-\delta_H x}{y^d}\right) \Biggr.\\
& \left. +\frac{1}{2 d}\left(\frac{1}{d}-1\right)\left(\frac{c_{H,\lambda,d-2}}{y^2}+\cdots +\frac{c_{H,\lambda,0}-\delta_H x}{y^d}\right)^2+\cdots\right] 
\end{align}
and
\begin{align}{\label{c_F}}
t_F(A_1(\lambda),A_3 y)-O(y^{-d}) = A_3 y \Biggl[ 1+ & \frac{1}{d}\left(\frac{c_{F,A_1(\lambda),d-2}}{(A_3 y)^2}+\cdots +\frac{c_{F,A_1(\lambda),0}-\delta_F x}{(A_3 y)^d}\right) \Biggr.\\
& \left. +\frac{1}{2 d}\left(\frac{1}{d}-1\right)\left(\frac{c_{F,A_1(\lambda),d-2}}{(A_3 y)^2}+\cdots +\frac{c_{F,A_1(\lambda),0}-\delta_F x}{(A_3 y)^d}\right)^2 +\cdots\right].
\end{align}
Comparing coefficient of $1/y^2$ using \eqref{c_H}, \eqref{c_F} and \eqref{asymp} we get
\[
c_{H,\lambda,d-2}=\frac{c_{F,A_1(\lambda),d-2}}{A_3^2}.
\]
Let, for some $2\leq k\leq d-1$ and for every $2\le i \leq k$
\[c_{H,\lambda,d-i}=\frac{c_{F,A_1(\lambda),d-i}}{A_3^{i}}.\]
Since $A_3\frac{c_{H,\lambda,d-(k+1)}}{d}+A_3\kappa(c_{H,\lambda,d-2},\cdots,c_{H,\lambda,d-k})=\frac{c_{F,A_1(\lambda),d-(k+1)}}{A_3^k d}+A_3\kappa\left(\frac{c_{F,A_1(\lambda),d-2}}{A_3^2},\cdots,
\frac{c_{F,A_1(\lambda)},d-k}{A_3^k}\right)$,
\[
c_{H,\lambda,d-(k+1)}=\frac{c_{F,A_1(\lambda),d-(k+1)}}{A_3^{(k+1)}}.
\]
Thus, for every $2\leq k\leq d$, 
\[
c_{H,\lambda,d-k}=\frac{c_{F,A_1(\lambda),d-k}}{A_3^{k}} \implies A_3^{k} c_{H,\lambda,d-k}=c_{F,A_1(\lambda),d-k}.
\]
We now observe the relationship between the polynomials $q_{F,A_1(\lambda)}$ and $q_{H,\lambda}$,
\begin{align*}
q_{F,A_1(\lambda)}(y) & = y^d+c_{F,A_1(\lambda),d-2}y^{d-2}+\cdots+c_{F,A_1(\lambda),0} \\
&= A_3^d\left(\frac{y}{A_3}\right)^d+A_3^dc_{H,\lambda,d-2}\left(\frac{y}{A_3}\right)^{d-2}+\cdots+A_3^d c_{H,\lambda,0}\\
& = A_3^d q_{H,\lambda}\left(\frac{y}{A_3}\right)
\end{align*}
For $1\leq i\leq 3$, consider the automorphisms $\tilde L_1:\mathbb{D}\times\mathbb{C}^2\to \mathbb{D}\times\mathbb{C}^2$ defined by $\tilde L_1(\lambda,x,y)=(\lambda,x,A_3^{-1} y)$, $\tilde L_2(\lambda,x,y)=(A_1(\lambda),A_3 x,A_3^d y)$ and $\tilde L_3(\lambda,x,y)=(A_1(\lambda),A_3^d x ,y)$. Then,
\[
F\circ \tilde L_3 = \tilde L_2 \circ H \circ \tilde L_1.
\]

\medskip
\no
{\it Step 5.}
Now we study the general case where $\delta_H\neq \delta_F$. In above step, we observed that $\delta_H^{d-1}=\delta_F^{d-1}$. And $H$ lifts to $\tilde{H}$ which is defines by 
\[
\tilde{H}(\lambda,s,t)=\left(\lambda,\frac{\delta_H}{d}s+Q_{H,\lambda}(t),t^d\right).
\]
Let $H_1=H^{d-1}$, then $\deg(H_1)=d_1=d^{d-1}$. Lift of $H_1|_{U_H^+}$ is given by 
\begin{align*}
\tilde{H}^{d-1}(\lambda,s,t)& =  \left(\lambda,\left(\frac{\delta_H}{d}\right)^{d-1}s+\left(\frac{\delta_H}{d}\right)^{d-2}Q_{H,\lambda}(t)+\cdots+Q_{H,\lambda}(t^{d^{d-2}}),t^{d^{d-1}}\right)\\
& =\left(\lambda,\left(\frac{\delta_H}{d}\right)^{d-1}s+\tilde{Q}_{H,\lambda}(t),t^{d^{d-1}}\right),
\end{align*}
where $\tilde{Q}_{H,\lambda}$ is polynomial of degree $d^{d-2}(d+1)$.
The fundamental group of $U_{H_1}^+$ is isomorphic to $\mathbb{Z}[\frac{1}{d_1}]=\mathbb{Z}[\frac{1}{d}]$. For every element $\frac{k}{d_1^n}\in \mathbb{Z}[\frac{1}{d_1}]/\mathbb{Z}$, there exists a unique deck transformation $\gamma_{H_1,\frac{k}{d_1^n}}:\mathbb{D}\times\mathbb{C}\times\mathbb{C}\backslash\mathbb{\overline{D}}\to \mathbb{D}\times\mathbb{C}\times\mathbb{C}\backslash\mathbb{\overline{D}}$ defined by 
\[
(\lambda,s,t)\mapsto\left(\lambda,s+\frac{d_1}{\delta_1}\sum_{l=0}^{\infty}\left(\frac{d_1}{\delta_1}\right)^l\left[\tilde{Q}_{H_1,\lambda}(t^{d_1^l})-\tilde{Q}_{H_1,\lambda}((e^{2\pi i k /d_1^n}t)^{d_1^l})\right],e^{2\pi i k/d_1^n}t\right).
\]
Similarly, let $F_1=F^{d-1}$ is of degree $d_1$. Let $\Gamma:U_{H_1}^+\to U_{F_1}^+$ be the biholomorphism. Note that $\delta_{H_1}=\delta_H^{d-1}=\delta_F^{d-1}=\delta_{F_1}$. Caring out similar set of arguments as in Step 3, we get that its lift $\tilde{\Gamma}:\mathbb{D}\times\mathbb{C}\times\mathbb{C}\backslash\mathbb{\overline{D}}\to \mathbb{D}\times\mathbb{C}\times\mathbb{C}\backslash\mathbb{\overline{D}}$ is of the form \[\tilde{\Gamma}(\lambda,s,t)=(A_1(\lambda),\beta' s+\gamma'(\lambda),A_3 t),\] where $\beta'^{d-1}=1$, $A_3^{d^{d-2}(d+1)d^n}=\beta'$ for every $n \geq 1$. Let $\tilde{A}_3=A_3^{d^{d-2}}$, then for $1\leq i \leq d-1$, \[\tilde{A}_3^{i+1}D_{H,\lambda,i}=D_{F,A_1(\lambda),i},\] which gives $q_{F,A_1(\lambda)}(y)=\tilde{A}_3^d q_{H,\lambda}(\tilde{A}_3^{-1} y)$. Further $\delta_F=\alpha_1\delta_H$ for some $\alpha_1^{d-1}=1.$ Consider the automorphisms $\tilde L_4(\lambda,x,y)=(A_1(\lambda),\tilde{A}_3 x,\tilde{A}_3^d y)$ and $\tilde{L}_5(\lambda,x,y)=(A_1^{-1}(\lambda),\tilde{A}_3^{-d}\alpha_1 x, \tilde{A}_3^{-1}y)$ on $\mathbb{D}\times\mathbb{C}^2$. Then, $F=\tilde{L}_4\circ H\circ \tilde{L}_5$.

\section{Non-Compact Parameter Space}
Recall that $H:\mathbb{C}\times \mathbb{C}^2 \to \mathbb{C}\times \mathbb{C}^2$ is defined in \eqref{skewhenon} by 
\begin{equation*}
     H(\lambda,x,y)=(c\lambda,H_\lambda(x,y)).
\end{equation*}
For $n \geq 1$, let
\[
x_n^\lambda = proj_1 \circ H_\lambda^n, \text{\hspace{1mm} and } y_n^\lambda= proj_2 \circ H_\lambda^n.
\]
We first consider the case $|c|>1$. Recall, for $R>0$, 
\begin{center}
$V_R^+ = \{(\lambda,x,y) \in \mathbb{C}^3 : |y| > \max \{R, |x|, |\lambda|^{\tilde{d}+1} \}\},$\\
\vspace{2mm}
$V_R^-= \{(\lambda,x,y) \in \mathbb{C}^3 : |x| > \max \{R, |y|\}, |\lambda|<1 \}.$
\end{center}
\noindent
By \cite[Section 3]{BP}, for sufficiently large $R$, $H(V_R^+)\subset V_R^+$, and $H^{-1}(V_R^-)\subset V_R^-$ and 
\[
U_H^+ = \bigcup_{n\geq 0} H^{-n}(V^+_R).
\]

\medskip
\noindent
Now we prove Theorem \ref{thm3}.\\
{\it Step 1.}
Since, for $0\leq j\leq d-1$, $c_j$'s are polynomials in $\lambda$ of degree at most $\tilde{d}$, there exists $M>1$ such that 
\[
|c_j(\lambda)|<M|\lambda|^{\tilde{d}}_{+},
\]
where $|\lambda|_+=\max\{1,|\lambda|\}$. Thus, for $(\lambda,x,y)\in V_R^+$,
\[
|c_j(\lambda)|<M|\lambda|^{\tilde{d}}_{+} < M|y|^{\tilde{d}/(\tilde{d}+1)} \implies \frac{|c_j(\lambda)|}{|y|}<\frac{M}{|y|^{1/(\tilde{d}+1)}}.
\]
Choose sufficiently large $R$ such that
\[
\left|\frac{p_\lambda(y)-\delta x}{y^d} -c_H\right| = \left|\frac{c_{\lambda,d-1}}{y}+ \cdots +\frac{c_{\lambda,0}-\delta x}{y^d}\right| <\left|\frac{c_H}{2}\right|.
\]
for $(x,y)\in V_R^+$.
Thus, we can choose a branch of the logarithm, denoted by $\beta_H$, on $V_R^+$ such that
\[
e^{\beta_H(\lambda,x,y)} = \frac{p_\lambda(y)-\delta x}{y^d} = c_H +\frac{c_{\lambda,d-1}}{y}+ \cdots +\frac{c_{\lambda,0}-\delta x}{y^d}.
\]
Inductively, for any natural $n\geq 1$, 
\[
proj_2 \circ H_\lambda^n(x,y)=y^{d^n} e^{\beta_H(H^{n-1}(\lambda,x,y))+ \cdots +d^{n-1}\beta_H(\lambda,x,y)}.
\]
Let 
\begin{equation}\label{eq5.1}
\begin{split}
\phi_H^+(\lambda,x,y) & = \lim_{n \to \infty} (proj_2 \circ H_\lambda^n(x,y))^{1/d^n}\\
& = \lim_{n \to \infty} y e^{\frac{1}{d}\beta_H(\lambda,x,y)+\cdots +\frac{1}{d^n}\beta_H(H^{n-1}(\lambda,x,y))}\\
& =  ye^{\gamma_H^+(\lambda,x,y)}.
\end{split}
\end{equation}
Since for  large $R>0$, $\beta_H$ is bounded on $V_R^+$, the convergence is uniform on compact subsets of $V_R^+$. Therefore, $\phi_H^+:V_R^+ \to \mathbb{C}\backslash \mathbb{\overline{D}}$ is holomorphic. Moreover, 
\begin{equation}\label{eq5.2}
\begin{split}
    \phi_H^+(H(\lambda,x,y)) & = \lim_{n \to \infty} proj_2\circ H_\lambda(x,y) e^{\frac{1}{d}\beta_H(H(\lambda,x,y))+\cdots +\frac{1}{d^n}\beta_H(H^{n}(\lambda,x,y))}\\
    &= \lim_{n \to \infty} y^d e^{\beta_H(\lambda,x,y)} e^{\frac{1}{d}\beta_H(H(\lambda,x,y))+\cdots +\frac{1}{d^n}\beta_H(H^{n}(\lambda,x,y))}\\
    &=(\phi_H^+(\lambda,x,y))^d.
    \end{split}
\end{equation}
As $y \to \infty$ in $V_R^+$, $\phi_H^+ \sim c_H^{\frac{1}{d-1}}y$.

\medskip
\no
{\it Step 2.}
Define the closed holomorphic 1-form on $V_R^+$ as $\omega_H^+ =\mathrm{d}\phi_H^+/\phi_H^+$ and extend it to $U_H^+$ using the functional equation \eqref{eq5.2}. For $n\geq 1$, on $H^{-n}(V_R^+)$ we define 
\[
\omega_H^+ = \frac{1}{d^n}(H^n)^*\omega_H^+.
\]
We define
\[
F_H^+ : \pi_1(U_H^+) \to \mathbb{Z}\left[\frac{1}{d}\right] \text{\hspace{1mm} given by } [C] \mapsto \frac{1}{2\pi i}\int_C \omega_H^+.
\]
Similar to compact base space case, since $\omega_H^+= \mathrm{d}y/y +\mathrm{d}\gamma_H^+$ and $\gamma_H^+$ is holomorphic on $V_R^+$, $F_H^+(\pi_1(V_R^+))=\mathbb{Z}$. Using \eqref{eq5.2}, we have $H^*\omega_H^+=d\omega_H^+$. Therefore, since $H:U_H^+\to U_H^+$ is a homeomorphism, the same argument as in compact case shows that $F_H^+$ is a group isomorphism.
Fix $z_0 \in V_R^+$. Let $A_H=\{(z,C): z \in U_H^+$, and $C$ is a path from $z_0$ to $z$ in $U^+_H\}$. Define an equivalence relation $\sim_H^+$ on $A_H$
as 
\begin{center}
$(z,C) \sim_H^+ (z',C')$ if and only if $z=z'$ and $[C\overline{C'}] \in (F_H^+)^{-1}(\mathbb{Z})$.
\end{center}
Let $\hat{U}_H^+$ be the set of equivalence classes $A_H/ \sim_H^+$ of this relation. Then the map $\pi_H:\hat{U}_H^+ \to U^+_H$ defined by $[(z,C)] \mapsto z$ is a covering map. Using $\pi_H$, we pull back the analytic structure of $U^+_H$ to give an analytic structure to $\hat{U}_H^+$ such that $\pi_H$ is holomorphic. Also, $\hat{U}_H^+$ has an open subset $\hat{V}_R^+=\{[(z,C)] \in \hat{U}_H^+: \pi_H([(z,C)]) \in V_R^+\}$ biholomorphic to $V_R^+$.
Define $\hat{\phi}_H^+ :\hat{U}_H^+ \to \mathbb{C}\backslash \mathbb{\overline{D}}$ by 
\begin{equation}\label{eq5.4}
[(z,C)] \mapsto \phi_H^+(z_0)e^{\int_C \omega_H^+}
\end{equation}
Note that, if $[(z,C)] \in \hat{V}_R^+$, then $\hat{\phi}_H^+([(z,C)])=\phi_H^+(z).$
Fix a path $L_H$ from $z_0$ to $H(z_0)$ in $V^+_R$, then the map $\hat{H}:\hat{U}_H^+ \to \hat{U}_H^+$ defined by \begin{equation}\label{eq5.5}
    [(z,C)] \mapsto [(H(z),L_H H(C))]
\end{equation}
is a lift of $H$, i.e., $H \circ \pi_H = \pi_H \circ \hat{H}.$ Note that every point has exactly $d$ preimages under $\hat{H}$. Since we have $H(V_R^+)\subset V_R^+$, so for $[(z,C)]\in \hat{V}_R^+$, $[(H(z),L_H H(C))] \in \hat{V}_R^+$. Hence, we obtain the increasing sequence of open sets
\[
\hat{V}_R^+ \subset \hat{H}^{-1}(\hat{V}_R^+) \subset \hat{H}^{-2}(\hat{V}_R^+)\subset \cdots.
\]
Moreover, if $[(z,C)]\in \hat{U}^+_H$, then there exist $n\geq 1$ such that 
\begin{center}
$H^n(z) \in V_R^+$ and $L_H H(L_H)\cdots H^{n-1}(L_H) H^n(C) \subset V^+_R$,
\end{center}
hence 
\[
\hat{U}_H^+ = \bigcup_{n \geq 0 } \hat{H}^{-n}(\hat{V}_R^+).
\]
Let $[(z,C)]\in \hat{U}_H^+$, then 
\begin{equation}\label{eq5.6}
\hat{\phi}_H^+(\hat{H}([(z,C)])) = \phi_H^+(z_0)e^{\int_{L_H}\omega_H^+}e^{\int_{H(C)}\omega_H^+} = \phi_H^+(H(z_0))e^{d\int_C\omega_H^+}= (\hat{\phi}_H^+([(z,C)]))^d.
\end{equation}

\medskip
\noindent
{\it Step 3.}
Let $M_1>0$ be a sufficiently large constant. Let
\[
U_R^+=\{(\lambda,x,y) \in V_R^+ : |\phi_H^+(\lambda,x,y)|> M_1 \max\{R,|x|, |\lambda|^{\tilde{d}+1}\}.
\]
Note that there exists positive constants $m_1$ and $m_2$ such that 
\[
m_1\leq \left|\frac{\phi_H^+(\lambda,x,y)}{y}\right|\leq m_2
\]
for $(\lambda,x,y)\in V_R^+$. Therefore, there exists $m_0 \in \mathbb{N}$, such that $H^{m_0}(V^+_R) \subset U^+_R$. Equation \eqref{eq5.2} implies that $H(U_R^+)\subset U_R^+$. Thus, we have the increasing sequence of open sets
\begin{center}
$U_R^+ \subset H^{-1}(U_R^+) \subset (H)^{-2}(U_R^+) \subset \cdots$.
\end{center}
Consider the biholomorphic coordinate change
\[
(\lambda,x,y) \mapsto (\lambda,x,\phi_H^+(\lambda,x,y))=(\lambda,x,t),
\]
which maps $U_R^+$ onto $\{(\lambda,x,t) \in  \mathbb{C}^3 : |t|>M_1 \max\{R,|x|, |\lambda|^{\tilde{d}+1}\} \}$ with inverse 
\[
(\lambda,x,t) \mapsto (\lambda,x,\theta_H^+(\lambda,x,t)).
\]
Also, consider the map $(\lambda,x,t) \mapsto (\lambda,\tilde{s},t)$, where
\[
\tilde{s}=t \int_0^x \frac{\partial \theta_H^+}{\partial t}(\lambda,\xi,t)d\xi.
\]
In the coordinates $(\lambda,\tilde{s},t)$, the map $H$ is given by
\[
(\lambda,\tilde{s},t)\mapsto (\lambda,x,y) \mapsto (c\lambda,y,p_\lambda(y)-\delta x) \mapsto (c\lambda,\tilde{s}_1,t_1),
\]
where $t_1=t^d$. Comparing the Jacobian determinant of the composition with the product of the Jacobian determinants of the individual coordinate changes, we obtain 
\[
dc\frac{\partial \tilde{s}_1}{\partial \tilde{s}} t^{d-1}=\delta c t^{d-1}.
\] 
Thus, $\tilde{s}_1 = \frac{\delta}{d} \tilde{s} +C(\lambda,t)$, where $C(\lambda,t)$ is holomorphic on $\mathbb{C}\times\{t\in \mathbb{C}:|t|>M_1R\}$.
Computing the image of $(\lambda,0,t)$ under the map $(\lambda,\tilde{s},t) \mapsto (c\lambda,\tilde{s}_1,t_1)$, we get
\[
C_H(\lambda,t)=  t^d \int_0^{\theta_H^+(\lambda,0,t)} \frac{\partial \theta_H^+}{\partial t}(c\lambda,\xi,t^d)d\xi= l_{d+1}t^{d+1}+l_d(\lambda)t^d+ \cdots +\frac{l_{-1}(\lambda)}{t}+\cdots.
\]
The coefficients in the Laurent expansion are holomorphic for $\lambda \in \mathbb{C}$, and the degree $d+1$ coefficient is constant. For each $\lambda \in \mathbb{C}$, let $Q_H(\lambda,t)$ be the polynomial part of $C_H(\lambda,t)$, and $C_H^-(\lambda,t)$ is the singular part. Let
\[
R_H(\lambda,t)=\frac{d}{\delta}C_H^-(\lambda,t)+\left(\frac{d}{\delta}\right)^2 C_H^-(c\lambda,t^d)+\left(\frac{d}{\delta}\right)^3 C_H^-(c^2\lambda,t^{d^2})+\cdots.
\] 
Define $s=\tilde{s}+R_H(\lambda,t)$. Since $C_H(\lambda,t)+R_H(c\lambda,t^d)-\frac{\delta}{d}R_H(\lambda,t)=Q_H(\lambda,t)$, $H$ in $(\lambda,s,t)$ coordinates becomes 
\[
(\lambda,s,t) \mapsto \left(c\lambda, \frac{\delta}{d} s +Q_H(\lambda,t),t^d\right).
\]  
Let $\psi_H^+=s$, then it satisfies, 
\begin{equation}\label{eq5.7}
\psi_H^+(H(\lambda,x,y))=\frac{\delta}{d} \psi_H^+(\lambda,x,y) +Q_H(\lambda,\phi_H^+(\lambda,x,y)).
\end{equation} 
For large $R$, the map $\Phi_H^+:(\lambda,x,y) \mapsto(c\lambda,\psi_H^+(\lambda,x,y),\phi_H^+(\lambda,x,y))$ is injective and holomorphic map on $U_R^+$, and the set $\{(\lambda,s,t): M|\lambda|^{\tilde{d}+1}<|t|,|s|<M_2|t|^2 ,|t|> M\} $ lies in the image of the map, where $M$(lagre), $M_2$(small) are constants.

\medskip
\noindent
{\it Step 4.}
Let $\hat{U}_R^+=\{[(z,C)] \in \hat{V}_R^+ : \pi_H([(z,C)]) \in U^+_R\}$, there exists $n\geq 1$ such that $\hat{H}^n(\hat{V}_R^+) \subset \hat{U}_R^+$, and
\begin{equation}\label{eq5.8}
\hat{U}_H^+ = \bigcup_{n \geq 0 } \hat{H}^{-n}(\hat{U}_R^+).
\end{equation}
For $[(z,C)] \in \hat{U}_R^+$, define 
\[
\hat{\psi}_H^+([(z,C)])=\psi_H^+(z),
\]
and extend it to $\hat{U}^+_H$ by using the functional equation \eqref{eq5.7} satisfied by $\psi_H^+$, so that it satisfies
\[
\hat{\psi}_H^+ \circ \hat{H}([(z,C)])= \frac{\delta}{d}\hat{\psi}_H^+([(z,C)])+ Q_H(\lambda,\hat{\phi}_H^+([(z,C)])),
\]
for $[(z,C)] \in \hat{U}_H^+$ with $z=(\lambda,x,y).$
Define $\hat{\Phi}_H^+: \hat{U}_H^+ \to \mathbb{C}^2 \times \mathbb{C}\backslash\mathbb{\overline{D}}$ by
\[
\hat{\Phi}_H^+([(z,C)])=\left(\lambda,\hat{\psi}_H^+([(z,C)]),\hat{\phi}_H^+([(z,C)])\right)
\]
for $[(z,C)] \in\hat{U}_H^+$ with $z=(\lambda,x,y).$ Note that the map is injective on $\hat{U}_R^+$.
Let $G_H:\mathbb{C}^2 \times \mathbb{C}\backslash\mathbb{\overline{D}} \to \mathbb{C}^2 \times \mathbb{C}\backslash\mathbb{\overline{D}}$ be defined by
\[
G_H(\lambda,s,t)=\left(c\lambda,\frac{\delta}{d}s+Q_H(\lambda,t),t^d\right).
\]
Then for any $n \geq 1$, 
\begin{equation}\label{eq5.9}
\hat{\Phi}_H^+ \circ \hat{H}^n=G_H^n \circ \hat{\Phi}_H^+
\end{equation}
Let $W_H= \hat{\Phi}_H^+(\hat{U}_R^+)$. Since $\hat{\Phi}_H^+ \circ \hat{H}=G_H \circ \hat{\Phi}_H^+$ and $\hat{H}(\hat{U}_R^+) \subset \hat{U}_R^+$, it follows that $G_H(W_H)\subset W_H$. Therefore, we have the increasing sequence of sets
\[
W_H \subset G_H^{-1}(W_H)\subset (G_H^2)^{-1}(W_H) \cdots.
\]
Furthermore, since $\{(\lambda,s,t): M|\lambda|^{\tilde{d}+1}<|t|,|s|<M_2|t|^2 ,|t|> M\} \subset \Phi_H^+(U_H^+)$, so $\mathbb{C}^2\times \mathbb{C}\backslash\mathbb{\overline{D}} \subset \bigcup_{n \geq 0} (G_H^n)^{-1}(W_H)$ and the other inclusion is trivial. Therefore, 
\[
\mathbb{C}^2\times \mathbb{C}\backslash\mathbb{\overline{D}} = \bigcup_{n \geq 0} (G_H^n)^{-1}(W_H).
\]
Note that \eqref{eq5.9} gives $\hat{\Phi}_H^+((\hat{H}^n)^{-1}(\hat{U}_R^+)) \subset (G_H^n)^{-1}(W_H)$. We now prove that $\hat{\Phi}_H^+: \hat{U}_H^+ \to \mathbb{C}^2 \times \mathbb{C}\backslash\mathbb{\overline{D}}$ is a biholomorphism by showing that for each $n \geq 1$, the map 
\[
\hat{\Phi}_H^+ |_{(\hat{H}^n)^{-1}(\hat{U}_R^+)} :(\hat{H}^n)^{-1}(\hat{U}_R^+) \to (G_H^n)^{-1}(W_H)
\]
is a bijection.
For injectivity, let $\hat \Phi_H^+([(z_1,C_1)])=\hat \Phi_H^+([(z_2,C_2)])$. Then \eqref{eq5.9} gives 
\[
\hat{\Phi}_H^+ \circ \hat{H}^n([(z_1,C_1)])=\hat{\Phi}_H^+ \circ \hat{H}^n([(z_2,C_2)])
\]
for every $n\geq 1$. We choose $n\geq 1$ large such that $\hat H^n([(z_1,C_1)]),\hat H^n([(z_2,C_2)])\in \hat U_R^+$. Since $\hat{\Phi}_H^+$ is injective on $\hat{U}_R^+$, $\hat H^n([(z_1,C_1)])=\hat H^n([(z_2,C_2)])$ which implies $H^n(z_1)=H^n(z_2)$. Further $H$ is injective, thus, $z_1=z_2$. Observe that if $[(z,C_1)],$ $[(z,C_2)]\in \hat{U}_H^+$ be such that $\hat{\phi}_H^+([(z,C_1)])=\hat{\phi}_H^+([(z,C_2)])$, then $[(z,C_1)]=[(z,C_2)].$ Thus, $[z_1,C_1]=[z_2,C_2]$. Moreover, if $[(z,C)] \in \hat{U}_R^+$, then $\{\hat{H}^{-n}([(z,C)])\}$ has $d^n$ elements which by \eqref{eq5.9} under $\hat{\Phi}^+_H$ are mapped to elements of $\{(G_H^n)^{-1}(\hat{\Phi}^+_H([(z,C)]))\}$ which also has $d^n$ elements. Since $\hat{\Phi}_H^+$ is injective on $(\hat{H}^n)^{-1}(\hat{U}_R^+)$ and both $\{(\hat{H}^n)^{-1}([(z,C)])\}$ and $\{(G_H^n)^{-1}(\hat{\Phi}_H^+([(z,C)]))\}$ have same cardinality, $\hat{\Phi}_H^+|_{(\hat H^n)^{-1}(\hat U_R^+)}$ is surjective.

\medskip
\noindent
Since $H^{-1}(\lambda,x,y)=\left(c^{-1}\lambda,H^{-1}_{c^{-1}\lambda}(x,y)\right)$, the case $|c|<1$ can be treated similarly.

\bibliographystyle{amsplain}

\end{document}